\documentclass[a4paper,11pt,notitlepage]{amsart}
\usepackage{amsmath,amssymb,mathrsfs,amsthm}
\usepackage{verbatim}
\usepackage[spanish,USenglish]{babel} 
\usepackage{color}
\usepackage{tikz}
\usepackage{graphicx}

\newcommand{\blue}[1]{\textcolor{blue}{#1}}

\newtheorem{theorem}{Theorem}

\newtheorem{lemma}[theorem]{Lemma}

\newtheorem{proposition}[theorem]{Proposition}
\newtheorem{definition}[theorem]{Definition}

\newtheorem{remark}[theorem]{Remark}

\newcommand{\R}{\mathbb{{R}}}
\newcommand{\Z}{\mathbb{{Z}}}
\newcommand{\N}{\mathbb{{N}}}

\newcommand{\C}{\mathbb{{C}}}

\textwidth 16cm

\textheight 21cm

\oddsidemargin 0.4cm

\evensidemargin 0.4cm

\begin{document}

\title[Fractional first order differences involving diffusion semigroups]{Non-local fractional derivatives. Discrete and continuous}

\author[Abadias]{Luciano Abadias}
\address{Departamento de Matem\'aticas, Instituto Universitario de Matem\'aticas y Aplicaciones, Universidad de Zaragoza, 50009 Zaragoza, Spain.}
\email{labadias@unizar.es}

\author[De Le\'on]{Marta De Le\'on}
\address{Departamento de Matem\'aticas, Facultad de Ciencias, Universidad Aut\'onoma de Madrid, 28049 Madrid, Spain.}
\email{marta.leon@uam.es}

\author[Torrea]{Jos\'e L. Torrea}
\address{Departamento de Matem\'aticas, Facultad de Ciencias, Universidad Aut\'onoma de Madrid, 28049 Madrid, Spain, and Instituto de Ciencias Matem\'aticas (CSIC-UAM-UC3M-UCM).}
\email{joseluis.torrea@uam.es}

\thanks{L. Abadias has been partially supported  by Project MTM2013-42105-P, DGI-FEDER, of the MCYTS, and Project E-64, D.G. Arag\'on. Second and third authors have been partially supported by MTM2015-66157-C2-1-P}

\subjclass[2010]{Primary: 35R11, 35R09. Secondary: 34A08, 26A33.}

\keywords{Nonlocal operators. Fractional discrete derivatives. Hšlder estimates. Semigroups}

\begin{abstract} We prove  maximum and comparison principles for fractional discrete derivatives in the integers. Regularity results when the space is a mesh of length $h$, and approximation theorems to the continuous fractional derivatives are shown. When the functions are good enough,  these approximation procedures give a measure of the order of approximation.  These results also allows us to prove the coincidence, for good enough functions, of the Marchaud and Gr\"unwald-Letnikov derivatives in every point and the speed of convergence to the Gr\"unwald-Letnikov derivative.  The fractional discrete derivative will be also described as a Neumann-Dirichlet operator defined by a semi-discrete extension problem. Some operators related to the Harmonic Analysis associated to the discrete derivative will be also considered, in particular their behavior in the Lebesgue spaces $\ell^p(\Z).$
\end{abstract}

\date{}

\maketitle

\section{Introduction}

Fractional derivatives on time have been used to propose nonlocal models to describe non-diffusive transport
in magnetically confined plasmas, see \cite{Castillo}. They also appear in the study of parabolic problems in which it is natural to take into account the past, see\cite{Caffarelli}. Some porous medium flow with fractional time derivative have been considered recently, see \cite{Caffarelli2}. In these cases, regarding the fractional derivative on time, several discretization techniques play a crucial role. In the literature,  we can find different representations of the classical definition of the Riemann-Liouville fractional derivative as Caputo, Marchaud or Gr\"unwald-Letnikov, see \cite{Samko}. In general they no coincide, for example they have different behavior respect to constant functions.

In this paper we  study  fractional discrete derivatives. We shall prove  maximum and comparison principles as well as regularity results. Also  approximation theorems to the continuous fractional derivatives will be shown. When the functions are good enough,  these approximation procedures give a measure of the order of approximation.  These results also allows us to prove the coincidence, for good enough functions,  of the Marchaud and Gr\"unwald-Letnikov derivatives in every point and the speed of convergence to the Gr\"unwald-Letnikov derivative.  The fractional discrete derivative will be also described as a Neumann-Dirichlet operator defined by a semi-discrete extension problem. Some operators related to the Harmonic Analysis associated to the discrete derivative will be also considered, in particular their behavior in the Lebesgue spaces $\ell^p(\Z).$

As far as we know   S. Chapman in 1911, see \cite{Chapman}, was the first author who consider ``differences of fractional order''. See also the paper of 1956 by B. Kuttner, \cite{Kuttner}. For $s>0$, given a sequence $a_n$ they define\begin{equation}\label{Chapman}\triangle^s a_n= \sum_{m=0}^{\infty} \binom{-s-1+m}{m} a_{n+m}.
\end{equation}
Their motivation was to extend the obvious formula for differences of natural order. Also, by using Fourier transform in the integers it can be  checked that the above definition produces $\widehat{(\triangle^s a_n)}(\theta) = (1-e^{i\theta})^s \hat{a_n}(\theta)$ in coherence with the fact $\widehat{(a_n-a_{n+1})}(\theta) = (1-e^{i\theta})\hat{a_n}(\theta).$

Observe that Chapman's definition only  cares about   the future, but we shall also consider the discrete derivatives which cares about   the past. For $f:\Z\rightarrow\R$ ,  we define
 {\sl ``the discrete derivative from the right''} and {\sl ``the discrete
derivative from the left''} as the operators given by the formulas
\begin{equation}
\delta_{\rm right}f(n)=f(n)-f(n+1)\:\: and\:\:\delta_{\rm left}f(n)=f(n)-f(n-1).
\end{equation}

Given the function  $G_t(n)=e^{-t}\frac{t^{n}}{n!},\, n\in\N_0$, we define
\begin{equation}\label{uu}
u(n,t) = \displaystyle\sum_{j=0}^\infty G_t(j)f(n+j).
\end{equation}
It can be seen, see Section \ref{Diffusion}, that $u$ satisfies the following semi-discrete transport equation
\begin{equation}\label{eq1}
\left\{\begin{array}{ll}
\partial_t u(n,t)+ \delta_{\rm right}u(n,t) =0,&n\in\Z,\,t\geq 0,\\
u(n,0)=f(n),&n\in\Z.
\end{array} \right.
\end{equation}
The function $v(n,t)= \displaystyle\sum_{j=0}^\infty G_t(j)f(n-j)$ satisfies the analogous equation for  $\delta_{\rm left}.$
These equations will drive us to define the following nonlocal fractional operators
\begin{align*}
(\delta_{\rm right})^\alpha f(n)=\frac{1}{\Gamma(-\alpha)}\int_0^\infty \frac{u(n,t) -f(n)}{t^{1+\alpha}}dt, \  0< \alpha < 1,
\end{align*}
and
\begin{align*}
(\delta_{\rm right})^{-\alpha} f(n)=\frac{1}{\Gamma(\alpha)}\int_0^\infty \frac{u(n,t)}{t^{1-\alpha}}dt, \  0< \alpha < 1,
\end{align*}
and the corresponding formula for $(\delta_{\rm left})^\alpha, \, -1< \alpha <1.$
We shall prove that this definition of $(\delta_{\rm right})^\alpha$ coincides with the definition (\ref{Chapman}) given by Chapman in 1911, see Section \ref{Diffusion}.

 Before  the concrete presentation of our results, we observe that the definitions above can be given for a mesh with step length $h>0$ instead of the integers mesh with step length $1$. In other words, we can work in the field $\mathbb{Z}_h= \{ jh: j\in \Z\}$. In this way we  define
$$
\delta_{\rm right}u(hn)=\frac{u(hn)-u(h(n+1))}{h},\quad \delta_{\rm left}u(hn)=\frac{u(hn)-u(h(n-1))}{h},\quad n\in\Z, $$
and
the associated   $G_{t/h}(j)$

Now we state the main results of this paper.
\begin{theorem}\label{'teo22'} Let $0<\alpha<1$ and $1\leq p\leq \infty.$ \begin{itemize}
\item[(i)] Let $u\in\ell^p(\Z_h)$ such that $u(j_0h)=0$ for some $j_0\in\Z,$ and $u(jh)\geq 0$ for all $j_0\leq j.$ Then $(\delta_{\rm right})^{\alpha}u(j_0h)\leq 0.$ Moreover, $(\delta_{\rm right})^{\alpha}u(j_0h)= 0$ if and only if $u(jh)=0$ for all $j_0\leq j.$

\item[(ii)] Let $u,v\in\ell^p(\Z_h)$ such that $u(j_0h)=v(j_0h)$ for some $j_0\in\Z,$ and $u(jh)\geq v(jh)$ for all $j_0\leq j.$ Then $(\delta_{\rm right})^{\alpha}u(j_0h)\leq (\delta_{\rm right})^{\alpha}v(j_0h).$ Moreover, $(\delta_{\rm right})^{\alpha}u(j_0h)=(\delta_{\rm right})^{\alpha}v(j_0h)$ if and only if $u(jh)=v(jh)$ for all $j_0\leq j.$
\end{itemize}
\end{theorem}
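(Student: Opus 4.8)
The plan is to work entirely from the integral representation of the fractional operator. On the mesh $\Z_h$ the solution of the transport equation is $\sum_{j=0}^\infty G_{t/h}(j)\,u(h(j_0+j))$, and since $\sum_{j=0}^\infty G_{t/h}(j)=e^{-t/h}\sum_{j=0}^\infty (t/h)^j/j!=1$, the $j=0$ term cancels the subtracted $u(j_0h)$ and one is left with
\begin{equation*}
(\delta_{\rm right})^{\alpha}u(j_0h)=\frac{1}{\Gamma(-\alpha)}\int_0^\infty \frac{1}{t^{1+\alpha}}\sum_{j=1}^\infty G_{t/h}(j)\bigl(u(h(j_0+j))-u(j_0h)\bigr)\,dt .
\end{equation*}
The single sign ingredient I would isolate first is that $\Gamma(-\alpha)<0$ for $0<\alpha<1$, since $\Gamma(-\alpha)=\Gamma(1-\alpha)/(-\alpha)$ with $\Gamma(1-\alpha)>0$. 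The well-definedness of this expression for $u\in\ell^p(\Z_h)$, together with the interchange of the series and the integral, is guaranteed by the material of the previous sections, so I would invoke it rather than reprove it.

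For (i), I would use $u(j_0h)=0$ to rewrite each increment as $u(h(j_0+j))-u(j_0h)=u(h(j_0+j))$, which is $\geq 0$ for every $j\geq 1$ because $j_0+j\geq j_0$. As $G_{t/h}(j)>0$ and $t^{-1-\alpha}>0$, the integrand is nonnegative, hence the integral is $\geq 0$; multiplying by the negative factor $1/\Gamma(-\alpha)$ gives $(\delta_{\rm right})^{\alpha}u(j_0h)\leq 0$. For the equality statement, note that $(\delta_{\rm right})^{\alpha}u(j_0h)=0$ forces the (nonnegative) integrand to vanish for a.e.\ $t$, whence, by continuity in $t$, the function $N(t):=\sum_{j\geq 1}G_{t/h}(j)\,u(h(j_0+j))$ vanishes identically on $(0,\infty)$. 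Writing $N(t)=e^{-t/h}\sum_{j\geq1}\frac{(t/h)^j}{j!}u(h(j_0+j))$ and reading off the coefficients of this everywhere-convergent power series, each $u(h(j_0+j))=0$ for $j\geq 1$; together with $u(j_0h)=0$ this is exactly $u(jh)=0$ for all $j\geq j_0$. The converse is immediate, since then $N\equiv 0$ and the integral is $0$.

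Part (ii) I would deduce from (i) by linearity. Setting $w=u-v$, the operator being linear gives $(\delta_{\rm right})^{\alpha}w(j_0h)=(\delta_{\rm right})^{\alpha}u(j_0h)-(\delta_{\rm right})^{\alpha}v(j_0h)$, while the hypotheses say exactly $w(j_0h)=0$ and $w(jh)\geq 0$ for $j\geq j_0$. Applying (i) to $w$ yields $(\delta_{\rm right})^{\alpha}w(j_0h)\leq 0$, i.e.\ the claimed comparison, and the equality case of (i) translates into $w(jh)=0$, i.e.\ $u(jh)=v(jh)$, for all $j\geq j_0$.

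The only genuinely delicate point is the equality analysis: passing from ``the integral is zero'' to ``every increment is zero'' requires both the strict positivity $G_{t/h}(j)>0$ and an argument that a nonnegative integrand with vanishing weighted integral is identically zero, for which I would lean on the continuity (indeed real-analyticity) of $N$ in $t$ and the identity principle for power series. Everything else is a matter of bookkeeping with the sign of $\Gamma(-\alpha)$.
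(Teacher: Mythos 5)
Your proof is correct, but it runs along a different track from the paper's. The paper proves (i) in one line from the \emph{series} representation $(\delta_{\rm right})^{\alpha}u(j_0h)=h^{-\alpha}\sum_{m\in\N_0}\Lambda^{\alpha}(m)u((j_0+m)h)$: the hypothesis $u(j_0h)=0$ kills the $m=0$ term, and since $\Lambda^{\alpha}(m)<0$ for every $m\in\N$ the remaining sum of nonpositive terms is $\leq 0$, with the equality case immediate from the \emph{strict} negativity of each $\Lambda^{\alpha}(m)$. You instead stay with the integral (semigroup) representation and derive the sign from $\Gamma(-\alpha)<0$ together with the manifest positivity of the heat kernel $G_{t/h}(j)=e^{-t/h}(t/h)^j/j!$; this buys you independence from the sign analysis of the binomial kernel $\Lambda^{\alpha}$ (which the paper quotes from \eqref{asym} and the surrounding discussion), but you pay for it in the equality case, where you need the extra step ``nonnegative integrand with zero integral vanishes a.e., hence the real-analytic function $N(t)$ vanishes identically, hence all power-series coefficients vanish.'' That chain is sound — $N$ is continuous by dominated convergence, the series $\sum_{j\ge1}(t/h)^ju(h(j_0+j))/j!$ is entire since $u\in\ell^p\subset\ell^\infty$, and the identity principle applies — and your reduction of (ii) to (i) via $w=u-v$ is exactly the paper's. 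Since the two representations are identified by the Gamma-function computation in Section \ref{Diffusion}, the arguments are ultimately two faces of the same positivity, but as written yours is genuinely the longer, more self-contained route.
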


The following result is a consequence of the above theorem.

\begin{theorem}\label{'cor23'} Let $j_0<j_1\in\Z$ and $u,v\in\ell^p(\Z_h)$ with $1\leq p\leq \infty.$\begin{itemize}
\item[(i)] Let $u$ be a solution of \begin{equation*}
\left\{\begin{array}{ll}
(\delta_{\rm right})^{\alpha}u=f,& \text{in }[j_0,j_1)\\ \\
u=0,&\text{in }[j_1,\infty).
\end{array} \right.
\end{equation*}
If $f\geq 0$ in $[j_0,j_1)$ then $u\geq 0$ in $[j_0,\infty).$

\item[(ii)] If $(\delta_{\rm right})^{\alpha}u\leq 0$ in $[j_0,j_1)$ and $u\leq 0$ in $[j_1,\infty),$ then $$\sup_{j\geq j_0}u(jh)=\sup_{j\geq j_1}u(jh).$$

\item[(iii)] If $(\delta_{\rm right})^{\alpha}u\geq 0$ in $[j_0,j_1)$ and $u\geq 0$ in $[j_1,\infty),$ then $$\inf_{j\geq j_0}u(jh)=\inf_{j\geq j_1}u(jh).$$

\item[(iv)] If \begin{equation*}
\left\{\begin{array}{ll}
(\delta_{\rm right})^{\alpha}u\geq (\delta_{\rm right})^{\alpha}v,& \text{in }[j_0,j_1)\\ \\
u\geq v,&\text{in }[j_1,\infty),
\end{array} \right.
\end{equation*}
then $u\geq v$ in $[j_0,\infty).$ In particular, we have uniqueness of the Dirichlet problem \begin{equation*}
\left\{\begin{array}{ll}
(\delta_{\rm right})^{\alpha}u=f,& \text{in }[j_0,j_1)\\ \\
u=g,&\text{in }[j_1,\infty).
\end{array} \right.
\end{equation*}
\end{itemize}
\end{theorem}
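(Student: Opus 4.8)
The plan is to derive everything from Theorem \ref{'teo22'}, which is the fundamental pointwise maximum/comparison principle. All four parts of Theorem \ref{'cor23'} are statements about solutions on a half-line that vanish (or are controlled) past the index $j_1$, so the strategy is to apply the pointwise principle at a carefully chosen point — typically the first point where a candidate sign-violating behaviour would occur.

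For part (i), I would argue by contradiction. Suppose the conclusion $u\geq 0$ in $[j_0,\infty)$ fails. Since $u=0$ on $[j_1,\infty)$, the function $u$ is nonnegative from some point onward, so there exists a point $j_*\in[j_0,j_1)$ where $u$ attains its minimum over $[j_0,\infty)$ and that minimum is strictly negative. Consider the shifted function $w(jh)=u(jh)-u(j_*h)$. Then $w(j_*h)=0$ and $w(jh)\geq 0$ for all $j\geq j_*$ by minimality. Theorem \ref{'teo22'}(i) applied to $w$ at the point $j_*$ gives $(\delta_{\rm right})^\alpha w(j_*h)\leq 0$. But $(\delta_{\rm right})^\alpha$ annihilates constants (this is the key structural fact I would verify from the integral definition, since the operator is built from $u(n,t)-f(n)$), so $(\delta_{\rm right})^\alpha u(j_*h)=(\delta_{\rm right})^\alpha w(j_*h)\leq 0$. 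Since $j_*\in[j_0,j_1)$ we have $(\delta_{\rm right})^\alpha u(j_*h)=f(j_*h)\geq 0$, forcing $(\delta_{\rm right})^\alpha w(j_*h)=0$. The equality case of Theorem \ref{'teo22'}(i) then yields $w\equiv 0$ on $[j_*,\infty)$, i.e. $u$ is constant equal to its negative minimum on $[j_*,\infty)$ — contradicting $u=0$ on $[j_1,\infty)$.

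Parts (ii) and (iii) are the same argument run on the sup/inf rather than assuming a boundary value of zero. For (ii), let $M=\sup_{j\geq j_0}u(jh)$; if the supremum over $[j_1,\infty)$ were strictly smaller, the sup over $[j_0,\infty)$ would be attained at some interior $j_*\in[j_0,j_1)$. Subtracting the constant $M$ and applying Theorem \ref{'teo22'}(i) to $M-u$ (which is $\geq 0$ and vanishes at $j_*$) together with the hypothesis $(\delta_{\rm right})^\alpha u\leq 0$ gives a sign contradiction exactly as above, and the equality clause propagates the maximum all the way to $[j_1,\infty)$. Part (iii) is identical after the substitution $u\mapsto -u$, using that $(\delta_{\rm right})^\alpha$ is linear. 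Part (iv) is the comparison version: apply the same reasoning to $w=u-v$ using Theorem \ref{'teo22'}(ii) directly (or reduce to (i) by linearity), and the uniqueness of the Dirichlet problem follows by applying (iv) in both directions to the difference of two solutions with identical data.

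The main obstacle I anticipate is \textbf{the existence of the extremal point} $j_*$ and the passage to the equality case. Because we work on the infinite half-line $[j_0,\infty)$ and $u\in\ell^p(\Z_h)$, the infimum/supremum is genuinely attained thanks to the decay of $\ell^p$ sequences together with the boundary condition on $[j_1,\infty)$; one must be careful that the minimizing index lies in the open interval $[j_0,j_1)$ and not at infinity, which is exactly where the hypothesis $u=0$ (or $u\leq 0$, $u\geq v$) on $[j_1,\infty)$ is used. The second delicate point is confirming that $(\delta_{\rm right})^\alpha$ kills constants so that the shift $w=u-\text{const}$ has the same fractional derivative as $u$; this should follow immediately from the defining integral formula, since for a constant $c$ one has $u(n,t)=c$ for all $t$ and hence $u(n,t)-c=0$. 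Once these two facts are in hand, the strict-inequality/equality dichotomy of Theorem \ref{'teo22'} does all the remaining work.
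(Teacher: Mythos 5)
Your proposal is correct and follows essentially the same route as the paper: argue by contradiction at an extremal point $j_*\in[j_0,j_1)$ (which exists because $[j_0,j_1)$ is finite and the behaviour on $[j_1,\infty)$ is prescribed), subtract the constant $u(j_*h)$ — which is harmless since $\sum_m\Lambda^\alpha(m)=0$ — apply Theorem \ref{'teo22'} to get a sign on $(\delta_{\rm right})^{\alpha}u(j_*h)$, and use the equality clause to propagate constancy to $[j_1,\infty)$ for the final contradiction; parts (ii)--(iv) then follow by the same scheme, negation, and linearity, exactly as in the paper.
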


Also we shall get regularity results for the discrete H\"older classes $C_h^{k,\beta}$, see Theorem \ref{'teo24'}. These results drive us to compare the discrete fractional derivatives with the discretization of the continuous fractional derivatives as defined in \cite{Caffarelli,Caffarelli2,Bernardis}.
Given a function $u$ defined on $\R,$ we consider its restriction $r_h u$ (or discretization) to $\Z_h,$ that is, $r_h u(j)=u(hj)$ for $j\in\Z.$ We have the following theorems.
\begin{theorem} \label{'teo26'}Let $0<\beta\leq 1$ and $0<\alpha<1.$ \begin{itemize}
\item[(i)] Let $u\in C^{0,\beta}(\R)$ and $\alpha<\beta.$ Then $$\lVert (\delta_{\rm right})^{\alpha}(r_h u)-r_h ((D_{\text{right}})^{\alpha}u)\rVert_{\ell^{\infty}}\leq C_{\alpha} [u]_{C^{0,\beta}(\R)}h^{\beta-\alpha}.$$
\item[(ii)] Let $u\in C^{1,\beta}(\R)$ and $\alpha<\beta$. Then $$\lVert -\delta_{\rm right}(\delta_{\rm right})^{\alpha}(r_h u)-r_h (\frac{d}{dx}(D_{\text{right}})^{\alpha}u)\rVert_{\ell^{\infty}}\leq C_{\alpha} [u]_{C^{1,\beta}(\R)}h^{\beta-\alpha}.$$
\end{itemize}
Here, the operators $(D_{\text{right/left}})^{\alpha}$ are the Marchaud derivatives, see \cite{Samko}, that is, \begin{equation}\label{Marchaud}(D_{\text{right/left}})^{\alpha}f(x)=\frac{1}{\Gamma(-\alpha)}\int_0^{\infty}\frac{f(x\pm t)-f(x)}{t^{1+\alpha}}dt.\end{equation}
The classes $C^{k,\beta}(\R),\, k\in \N_0,\, \beta >0,$ are the usual H\"older classes on the real line (see Section \ref{PDE}).
\end{theorem}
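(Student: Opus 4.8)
The plan is to reduce everything to the explicit convolution (Grünwald–Letnikov) kernel of the discrete operator and compare it, cell by cell, with the Marchaud kernel $\tau^{-1-\alpha}/\Gamma(-\alpha)$. First I would insert $U(n,t)=\sum_{j\ge 0}G_{t/h}(j)u(h(n+j))$ into the definition of $(\delta_{\rm right})^\alpha$, rescale $t=hs$, interchange sum and integral, and evaluate $\int_0^\infty G_s(j)s^{-1-\alpha}\,ds=\Gamma(j-\alpha)/\Gamma(j+1)$. Since the $j=0$ increment vanishes, this yields the pointwise formula
$$(\delta_{\rm right})^\alpha(r_hu)(n)=\sum_{j\ge 1}K_h(j)\big(u(h(n+j))-u(hn)\big),\qquad K_h(j)=\frac{h^{-\alpha}}{\Gamma(-\alpha)}\frac{\Gamma(j-\alpha)}{\Gamma(j+1)},$$
which is exactly the rescaled Grünwald–Letnikov kernel (the series converges absolutely for $u\in C^{0,\beta}(\R)$ because $\sum_j|K_h(j)|<\infty$). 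Writing the Marchaud derivative as $\int_0^\infty\kappa(\tau)\big(u(hn+\tau)-u(hn)\big)\,d\tau$ with $\kappa(\tau)=\tau^{-1-\alpha}/\Gamma(-\alpha)$, and splitting $(0,\infty)=(0,h/2)\cup\bigcup_{j\ge1}I_j$ with $I_j=((j-\tfrac12)h,(j+\tfrac12)h)$, the difference in (i) decomposes (after treating the near-origin piece $\int_0^{h/2}$ directly, which is $O([u]_{C^{0,\beta}}h^{\beta-\alpha})$ since $\beta>\alpha$) as
$$\sum_{j\ge 1}\Big(K_h(j)-\int_{I_j}\kappa(\tau)\,d\tau\Big)\big(u(h(n+j))-u(hn)\big)+\sum_{j\ge 1}\int_{I_j}\kappa(\tau)\big(u(h(n+j))-u(hn+\tau)\big)\,d\tau,$$
which I will call the kernel-matching term (A) and the increment-variation term (B).

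For (B) the node $hj$ and the variable $\tau$ stay inside one cell, so Hölder regularity gives $|u(h(n+j))-u(hn+\tau)|\le [u]_{C^{0,\beta}}|hj-\tau|^\beta\le [u]_{C^{0,\beta}}(h/2)^\beta$ \emph{uniformly in} $j$; since $\int_{h/2}^\infty|\kappa(\tau)|\,d\tau\sim h^{-\alpha}/(\alpha|\Gamma(-\alpha)|)$, term (B) is bounded by $C_\alpha[u]_{C^{0,\beta}}h^{\beta-\alpha}$. The essential point here is that one must compare increments at nearby points (scale $h$), not compare the Poisson average to its mean, since the latter spreads over a window of size $\sqrt{ht}$ and produces a spurious $(ht)^{\beta/2}$ growth with a divergent tail. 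For (A) I would use the global bound $|u(h(n+j))-u(hn)|\le [u]_{C^{0,\beta}}(hj)^\beta$ together with the quadrature estimate $|K_h(j)-\int_{I_j}\kappa(\tau)\,d\tau|\le C_\alpha h^{-\alpha}j^{-2-\alpha}$, so that the series is dominated by $\sum_{j\ge1}j^{\beta-2-\alpha}$, which converges because $\beta\le 1<1+\alpha$; the finitely many small-$j$ terms are bounded trivially. This again gives $C_\alpha[u]_{C^{0,\beta}}h^{\beta-\alpha}$, and collecting (A) and (B) and taking the supremum over $n$ yields (i) with a constant depending only on $\alpha$.

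The main obstacle is precisely the quadrature estimate in (A): one must expand $\Gamma(j-\alpha)/\Gamma(j+1)=j^{-1-\alpha}+\tfrac{\alpha(\alpha+1)}{2}j^{-2-\alpha}+O(j^{-3-\alpha})$ and exploit the symmetry of $I_j$ about $hj$ (which annihilates the $j^{-2-\alpha}$ term of $\int_{I_j}\kappa$), so that the discrepancy decays one power faster than the kernel itself. This extra decay is exactly what is needed to beat the growth $(hj)^\beta$ of the base increment and to keep the series convergent for the \emph{full} range $\alpha<\beta\le1$; boundedness of $u$ enters only to make the representations legitimate, while the \emph{size} of the error is governed solely by the seminorm $[u]_{C^{0,\beta}}$.

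For (ii) I would commute the operators: on $\Z_h$ both $\delta_{\rm right}$ and $(\delta_{\rm right})^\alpha$ are functions of the shift and so commute, and on $\R$ the Marchaud derivative commutes with $d/dx$, giving $\frac{d}{dx}(D_{\rm right})^\alpha u=(D_{\rm right})^\alpha u'$. Hence the quantity in (ii) equals $(\delta_{\rm right})^\alpha\big(-\delta_{\rm right}r_hu\big)-r_h\big((D_{\rm right})^\alpha u'\big)$, which I split as $(\delta_{\rm right})^\alpha\big(-\delta_{\rm right}r_hu-r_hu'\big)$ plus $\big[(\delta_{\rm right})^\alpha(r_hu')-r_h((D_{\rm right})^\alpha u')\big]$. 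The second bracket is exactly part (i) applied to $u'\in C^{0,\beta}(\R)$, giving $C_\alpha[u]_{C^{1,\beta}}h^{\beta-\alpha}$. For the first term, the forward-difference consistency estimate $\big|\tfrac{u(h(n+1))-u(hn)}{h}-u'(hn)\big|=\big|\tfrac1h\int_0^h(u'(hn+\sigma)-u'(hn))\,d\sigma\big|\le\tfrac1{1+\beta}[u']_{C^{0,\beta}}h^\beta$ shows that $-\delta_{\rm right}r_hu-r_hu'$ has $\ell^\infty$-norm $\lesssim[u]_{C^{1,\beta}}h^\beta$; since the Grünwald–Letnikov coefficients are absolutely summable ($|\binom{\alpha}{j}|\sim j^{-1-\alpha}$), $(\delta_{\rm right})^\alpha$ is bounded on $\ell^\infty(\Z_h)$ with norm $\lesssim h^{-\alpha}$, and applying it to this sequence produces the remaining factor $h^{\beta-\alpha}$. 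Adding the two contributions gives (ii).
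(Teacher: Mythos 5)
Your proposal is correct and follows essentially the same route as the paper: the paper also decomposes the Marchaud integral over the cells $[(j+m)h,(j+m+1)h)$, bounds the within-cell variation by H\"older regularity (your term (B)), and compares the cell integrals of $\tau^{-1-\alpha}/\Gamma(-\alpha)$ with the weights $\Lambda^{\alpha}(m)/h^{\alpha}$ via a quadrature estimate of size $C_{\alpha}h^{-\alpha}m^{-2-\alpha}$ (your term (A), the paper's Lemma 4.1), and proves (ii) by the same commutation-plus-consistency argument. One small remark: the symmetric centering of $I_j$ about $hj$ is not actually needed --- the paper uses cells with $jh+mh$ as left endpoint and still gets the $O(m^{-2-\alpha})$ discrepancy from the Mean Value Theorem together with the asymptotics $\Lambda^{\alpha}(m)=\frac{1}{m^{\alpha+1}\Gamma(-\alpha)}(1+O(1/m))$, which is all the extra decay the convergence of $\sum_m m^{\beta-2-\alpha}$ requires.
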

\noindent There are analogous results when we substitute $\delta_{\rm left}$ by $\delta_{\rm right}$ and $D_{\text{left}}$ by $D_{\text{right}}$ respectively.

 Consider the {\it fractional differences of order $\alpha$}, with $\alpha>0$
\begin{equation*}
\Delta_{h,\pm}^\alpha f(x)=\sum_{k=0}^\infty (-1)^k \binom{\alpha}{k}f(x\pm kh)=\sum_{k=0}^\infty\Lambda^\alpha(k)f(x\pm kh), \, x\in\R, h>0.
\end{equation*}
The  {\it Gr\"unwald-Letnikov derivatives} of a function $f$ are  defined by
\begin{equation}\label{GL}f^{\alpha}_\pm(x) =\lim_{h\rightarrow +0}\frac{\Delta_{h,\pm}^\alpha f(x)}{h^\alpha}, \end{equation}
see \cite[pages 371--373]{Samko}.

The coincidence of the Marchaud and the Gr\"unwald-Letnikov derivatives is known in almost everywhere sense or in $L^p(\R), 1\le p < \infty$,  for $f\in L^r(\R)$,  with $r$ and $p$ independent, see \cite[Theorems 20.2 ,20.4]{Samko}. As a consequence  of our Theorem \ref{'teo26'} we shall prove that, for good enough functions, both derivatives coincide pointwise. Moreover we get  the speed of convergence of the limit in (\ref{GL}), which is of order $h^{\beta-\alpha}.$

\begin{theorem}\label{GL-M} Let $0<\alpha < \beta \le 1$ and $f\in C^{0,\beta}(\R).$ Then $f^{\alpha}_\pm(x) = (D_{\text{right/left}})^{\alpha}f(x)$ for every point $x\in \mathbb{R}$. Moreover, there exists a positive constant $C_{\alpha,\beta}$ such that
$$\left|(D_{\text{right/left}})^{\alpha}f(x)-\frac{\Delta_{h,\pm}^{\alpha}f(x)}{h^\alpha}\right|\leq C_{\alpha,\beta} [f]_{C^{0,\beta}(\R)}h^{\beta-\alpha},\quad x\in\R.$$
\end{theorem}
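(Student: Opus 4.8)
The plan is to deduce both the pointwise coincidence and the rate directly from Theorem \ref{'teo26'}, once the algebraic link between the semi-discrete operator $(\delta_{\rm right})^\alpha$ on the mesh $\Z_h$ and the Gr\"unwald--Letnikov quotient $\Delta_{h,+}^\alpha/h^\alpha$ is made explicit. The first step is to record the elementary identity
\begin{equation*}
(-1)^k\binom{\alpha}{k}=\binom{k-\alpha-1}{k}=\Lambda^\alpha(k),
\end{equation*}
which shows that the coefficients of $\Delta_{h,+}^\alpha$ are precisely Chapman's coefficients in (\ref{Chapman}). Since $(\delta_{\rm right})^\alpha$ coincides with Chapman's difference (Section \ref{Diffusion}), and since passing to step $h$ introduces a factor $h^{-\alpha}$ (via the change of variables $t=hs$ in the subordination integral together with the kernel $G_{t/h}$), one obtains for every $n\in\Z$ the exact equality
\begin{equation*}
(\delta_{\rm right})^\alpha(r_h u)(hn)=\frac{1}{h^\alpha}\sum_{m=0}^\infty \Lambda^\alpha(m)\,u(hn+mh)=\frac{\Delta_{h,+}^\alpha u(hn)}{h^\alpha}.
\end{equation*}

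The only gap between Theorem \ref{'teo26'} and the present statement is that the former controls the error at mesh points $hn$, whereas here we need every $x\in\R$. The second step closes this gap by translation. Fix $x\in\R$ and set $\tilde u(y)=u(x+y)$; then $x$ becomes the origin, which is a node of $\Z_h$ for \emph{every} $h>0$. Because the Marchaud derivative and the fractional difference depend on $u$ only through its forward increments, translation invariance gives $(D_{\text{right}})^\alpha\tilde u(0)=(D_{\text{right}})^\alpha u(x)$ and $\Delta_{h,+}^\alpha\tilde u(0)=\Delta_{h,+}^\alpha u(x)$, while the H\"older seminorm is unchanged, $[\tilde u]_{C^{0,\beta}(\R)}=[u]_{C^{0,\beta}(\R)}$.

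Applying Theorem \ref{'teo26'}(i) to $\tilde u$ and reading off the $\ell^\infty$ bound at the node $0$, the first step rewrites it as
\begin{equation*}
\left|\frac{\Delta_{h,+}^\alpha u(x)}{h^\alpha}-(D_{\text{right}})^\alpha u(x)\right|
=\left|(\delta_{\rm right})^\alpha(r_h\tilde u)(0)-(D_{\text{right}})^\alpha\tilde u(0)\right|
\le C_\alpha\,[u]_{C^{0,\beta}(\R)}\,h^{\beta-\alpha},
\end{equation*}
which is exactly the claimed estimate with $C_{\alpha,\beta}=C_\alpha$. Since $\beta>\alpha$, letting $h\to 0^+$ forces $\Delta_{h,+}^\alpha u(x)/h^\alpha\to (D_{\text{right}})^\alpha u(x)$, that is $f^\alpha_+(x)=(D_{\text{right}})^\alpha f(x)$ at every $x$; the left versions follow verbatim upon replacing $\delta_{\rm right},D_{\text{right}},\Delta_{h,+}^\alpha$ by their left analogues.

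I expect the only genuine work to lie in the first step, namely verifying the coefficient identity and tracking the $h^{-\alpha}$ scaling of $(\delta_{\rm right})^\alpha$ on $\Z_h$; once the exact identification $(\delta_{\rm right})^\alpha(r_h u)(hn)=\Delta_{h,+}^\alpha u(hn)/h^\alpha$ is in hand, the theorem is a one-line corollary of Theorem \ref{'teo26'} through the translation trick. The main (minor) obstacle is to confirm that $\tilde u$ still belongs to the class for which Theorem \ref{'teo26'} was established and that the constant $C_\alpha$ is truly independent of $x$; both are immediate from the translation invariance of $C^{0,\beta}(\R)$.
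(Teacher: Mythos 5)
Your proof is correct, but it follows a genuinely different route from the paper's. The paper fixes $x$, picks the nearest node $j_0h\le x<(j_0+1)h$, and splits the error into three pieces: the variation of the Marchaud derivative between $x$ and $j_0h$, the node error $\left|(D_{\text{right}})^{\alpha}f(j_0h)-h^{-\alpha}\Delta_{h,+}^{\alpha}f(j_0h)\right|$ controlled by Theorem \ref{'teo26'}, and the variation of the Gr\"unwald--Letnikov quotient between $j_0h$ and $x$, the first and third being estimated by $h^{\beta-\alpha}$ directly from the H\"older condition (the third also using $\sum_{k}\left|\binom{\alpha}{k}\right|\le C_{\alpha}$). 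You instead translate so that $x$ itself is a node of $\Z_h$ for every $h$ and invoke Theorem \ref{'teo26'}(i) once for $\tilde u(y)=u(x+y)$; translation covariance of $(D_{\text{right}})^{\alpha}$ and of $\Delta_{h,+}^{\alpha}$, together with the invariance of $[\cdot]_{C^{0,\beta}(\R)}$, makes the two extra terms disappear and gives uniformity in $x$ for free. Your preliminary identification $(\delta_{\rm right})^{\alpha}(r_hu)(hn)=h^{-\alpha}\Delta_{h,+}^{\alpha}u(hn)$, via $\Lambda^{\alpha}(k)=(-1)^k\binom{\alpha}{k}$ and $\sum_{k\ge 0}\Lambda^{\alpha}(k)=0$ (legitimate by the absolute summability $|\Lambda^{\alpha}(k)|\sim C k^{-1-\alpha}$ for bounded $u$), is exactly the identity the paper uses implicitly when comparing the two objects at mesh points. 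Both arguments rest entirely on Theorem \ref{'teo26'}(i); yours is somewhat cleaner, while the paper's three-term splitting avoids introducing an auxiliary translated function at the cost of two additional (easy) estimates.
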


The operators $(\delta_{\rm right/left})^\alpha$ are non-local, but they can be understood as limits (when $t\to0$) of a local extension problem on $\R_+\times\Z$. In fact, by following the ideas of \cite{CS,ST, Gale} we get the next result.

\begin{theorem}\label{gale} Let $f\in \ell^p(\Z)$ and $0<\gamma<1.$ Consider the equation \begin{equation}\label{extension}
\partial^2_{zz}U(z,\cdot) + \frac{1-2 \gamma}{z}\partial_zU(z,\cdot) - \delta_{\rm right}U(z,\cdot)=0, \quad z\in S_{\pi/4},\end{equation} where $S_{\pi/4}=\{z\in\C\,|\, z\neq 0 \text{ and }|\arg z|<\pi/4 \}.$ The formula \begin{equation}\label{solution}U(z,\cdot)\,=\,\frac{z^{2\gamma}}{4^{\gamma}\Gamma(\gamma)}\int_0^\infty e^{-\frac{z^2}{4t}}u(\cdot,t)\frac{dt}{t^{1+\gamma}}\,=\,\frac{1}{\Gamma(\gamma)}\int_0^\infty e^{-\frac{z^2}{4t}}u(\cdot,t) \frac{dt}{t^{1-\gamma}}\end{equation}
 solves \eqref{extension} on $\ell^p(\Z)$, where $u(\cdot,t)$ is defined in (\ref{uu}) .
 Moreover, $$\lim_{z\to 0}U(z,\cdot)=f(\cdot)\ \text{ and }\ \frac{1}{2\gamma}\lim_{z\to 0}z^{1-2\gamma}\partial_zU(z,\cdot)=\frac{\Gamma(-\gamma)}{4^{\gamma}\Gamma(\gamma)}(\delta_{\rm right})^\gamma f(\cdot),$$ where both limits hold through proper subsectors of $S_{\pi/4}$ in the $\ell^p(\Z)$ sense.
 \newline A parallel result can be stated for $\delta_{\rm left}.$

\end{theorem}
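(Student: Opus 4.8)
The plan is to recognize that $u(\cdot,t)$ from \eqref{uu} is precisely the diffusion semigroup $e^{-t\delta_{\rm right}}f$ solving \eqref{eq1}, so that $\partial_t u(\cdot,t)=-\delta_{\rm right}u(\cdot,t)$, and that $\delta_{\rm right}$ is a bounded operator on every $\ell^p(\Z)$ (indeed $\lVert\delta_{\rm right}g\rVert_{\ell^p}\le 2\lVert g\rVert_{\ell^p}$). With this identification the argument is the subordination/extension machinery of \cite{CS,ST,Gale}: I would write $U(z,\cdot)=\int_0^\infty \Phi(z,t)\,u(\cdot,t)\,dt$ with the Poisson-type kernel
\[
\Phi(z,t)=\frac{z^{2\gamma}}{4^{\gamma}\Gamma(\gamma)}\,e^{-z^2/(4t)}\,t^{-1-\gamma},
\]
and then check three things in turn: that $U$ solves \eqref{extension}, that $U(z,\cdot)\to f$, and that the weighted normal derivative reproduces $(\delta_{\rm right})^\gamma f$.

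For the equation, the key is the pointwise identity
\[
\partial_{zz}\Phi+\frac{1-2\gamma}{z}\,\partial_z\Phi=\partial_t\Phi,
\]
which is a direct computation from the explicit form of $\Phi$. Differentiating under the integral sign and inserting this identity turns the left-hand side of \eqref{extension} into $\int_0^\infty \partial_t\Phi(z,t)\,u(\cdot,t)\,dt$; integrating by parts in $t$ and discarding the boundary terms (which vanish because $e^{-z^2/(4t)}$ decays superexponentially as $t\to0^+$ when $\Real z^2>0$, while the negative power of $t$ controls $t\to\infty$) leaves $-\int_0^\infty \Phi\,\partial_t u\,dt=\int_0^\infty \Phi\,\delta_{\rm right}u\,dt$. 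Since $\delta_{\rm right}$ is bounded it commutes with the integral, and the right-hand side is $\delta_{\rm right}U$, as required.

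The Dirichlet condition follows from an approximate-identity argument: the substitution $s=z^2/(4t)$ shows $\int_0^\infty \Phi(z,t)\,dt=1$, while for each fixed $t>0$ one has $\Phi(z,t)\to0$ as $z\to0$, so the mass concentrates at $t=0$; writing $U(z,\cdot)-f=\int_0^\infty\Phi(z,t)\big(u(\cdot,t)-f\big)\,dt$ and using $\lVert u(\cdot,t)-f\rVert_{\ell^p}\to0$ (norm continuity of the semigroup, $\delta_{\rm right}$ being bounded) gives the limit. For the Neumann condition I would compute $z^{1-2\gamma}\partial_z U$ directly; after one integration by parts in $t$ a cancellation of the two terms carrying the factor $\gamma$ leaves the clean expression
\[
z^{1-2\gamma}\partial_zU(z,\cdot)=\frac{2}{4^{\gamma}\Gamma(\gamma)}\int_0^\infty e^{-z^2/(4t)}\,\partial_t u(\cdot,t)\,t^{-\gamma}\,dt.
\]
Letting $z\to0$ and integrating by parts once more (the boundary terms vanish since $u(\cdot,t)-f\sim -t\,\delta_{\rm right}f$ near $t=0$) yields $\int_0^\infty \partial_t u\,t^{-\gamma}\,dt=\gamma\,\Gamma(-\gamma)\,(\delta_{\rm right})^\gamma f$, because $(\delta_{\rm right})^\gamma f=\frac{1}{\Gamma(-\gamma)}\int_0^\infty\big(u(\cdot,t)-f\big)t^{-1-\gamma}\,dt$. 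Collecting the constants produces exactly $\frac{1}{2\gamma}\lim_{z\to0}z^{1-2\gamma}\partial_zU=\frac{\Gamma(-\gamma)}{4^\gamma\Gamma(\gamma)}(\delta_{\rm right})^\gamma f$.

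The main obstacle I anticipate is not the formal algebra above but making all of it rigorous for complex $z$ in the sector $S_{\pi/4}$ and in the $\ell^p(\Z)$ sense: one must justify differentiation under the integral and the two integrations by parts, control the boundary contributions, and prove that both limits hold in norm uniformly on proper subsectors rather than merely pointwise. The decisive observation is that $|\arg z|<\pi/4$ forces $\Real z^2>0$, whence $|e^{-z^2/(4t)}|=e^{-(\Real z^2)/(4t)}\le1$ with a coefficient bounded below on each closed subsector; this provides the domination needed to interchange limits, integrals and the bounded operator $\delta_{\rm right}$, and to upgrade the pointwise identities to $\ell^p$-convergence. The parallel statement for $\delta_{\rm left}$ is obtained identically after replacing $u$ by $v(\cdot,t)=\sum_{j\ge0}G_t(j)f(\cdot-j)$.
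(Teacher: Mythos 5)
Your argument is essentially correct, but it takes a genuinely different route from the paper. The paper disposes of Theorem \ref{gale} in one line: having checked (Remark \ref{semigrupos2}) that $\{T_{t,\pm}\}_{t\ge0}$ are uniformly bounded $C_0$-semigroups on $\ell^p(\Z)$ generated by $-\delta_{\rm right/left}$, it invokes the general extension theorem \cite[Theorem 1.1]{Gale} as a black box, and then complements this in Section \ref{Sextension} with a purely computational verification that the explicit kernel $P^\gamma_t(j)=\frac{t^{j+\gamma}}{2^{j+\gamma-1}\Gamma(\gamma)j!}K_{j-\gamma}(t)$ satisfies \eqref{extension} pointwise, using the recurrences for the MacDonald functions. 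You instead re-derive the subordination machinery from scratch: the kernel identity $\partial_{zz}\Phi+\frac{1-2\gamma}{z}\partial_z\Phi=\partial_t\Phi$ (which I checked and is correct), the integration by parts transferring $\partial_t$ onto $u$, the approximate-identity argument for the Dirichlet datum, and the computation of the weighted normal derivative. This is in essence a proof of the relevant special case of \cite[Theorem 1.1]{Gale}, made easier here because $\delta_{\rm right}$ is bounded on $\ell^p(\Z)$ so all domain issues disappear. What you gain is self-containedness; what the paper gains is brevity plus the explicit MacDonald-function representation of the Poisson kernel, which it then reuses heavily in Section \ref{secciondura}.

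One technical point deserves care in your Neumann computation. The semigroup here is not analytic in the strong sense: one can check that $\lVert\delta_{\rm right}T_{t,+}\rVert_{\ell^p\to\ell^p}$ decays only like $t^{-1/2}$ (this is the same phenomenon behind the failure of the heat maximal function in Section \ref{secciondura}). Consequently the displayed integral $\int_0^\infty e^{-z^2/(4t)}\partial_tu(\cdot,t)\,t^{-\gamma}\,dt$ is \emph{not} absolutely convergent at $t=\infty$ when $\gamma\le\tfrac12$; it must be read as the absolutely convergent expression $-2\int_0^\infty\partial_t\bigl(t^{-\gamma}e^{-z^2/(4t)}\bigr)u(\cdot,t)\,dt$ from which it arises, and the final integration by parts should likewise land directly on the absolutely convergent integral $\int_0^\infty(u(\cdot,t)-f)\,t^{-1-\gamma}\,dt=\Gamma(-\gamma)(\delta_{\rm right})^\gamma f$. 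With that reading the boundary terms you discard do vanish ($u(\cdot,t)-f=O(t)$ near $0$, boundedness near $\infty$), and the constants come out as claimed.
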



%
\begin{remark} {\bf Extension problem for negative powers.}
It is clear that if a function $g$ is good enough, and in \eqref{solution} we substitute $f$ by $(\delta_{\rm right})^{-\gamma}g$ and $(\delta_{\rm right})^\gamma f$ by $g,$  $U$ solves the same equation with the initial Dirichlet condition  $(\delta_{\rm right})^{-\gamma}g$ and the Neumann condition $\frac{\Gamma(-\gamma)}{4^{\gamma}\Gamma(\gamma)}g.$ See \cite{Caffarelli3,Gale,ST}. \end{remark}

\begin{remark}
We will also see that the formula \eqref{solution} gives an explicit solution of \eqref{extension} in the classical sense for appropriate functions $f$ and $g,$ see  Section \ref{Sextension}.  In fact the formula provides an expression  of the  Poisson semigroup associated to $\delta_{\rm right}$, see Section \ref{secciondura}.
\end{remark}
\noindent Several versions of this theorem have been studied in \cite{Bernardis}.

The paper also contains results in the field of Harmonic Analysis related to the operators $\delta_{\rm right/left}$. In particular we analyze   maximal operators and Littlewood-Paley square functions associated to the heat and Poisson semigroups naturally linked to $\delta_{\rm right/left}$. In the case of the heat semigroup both, the maximal function and the square function, have a bad behavior on $\ell^p(\Z),$ see Section \ref{secciondura}. However the maximal operator and the square function associated to the Poisson  semigroups have suitable boundedness properties in $\ell^p(\Z)$.

We have the following result.
\begin{theorem}\label{'teo20'}
 Let $1< p<\infty.$ Let $S$ be either the maximal function or the square Littlewood-Paley function associated to the Poisson semigroups defined in (\ref{P.K.}). Then $S$ is bounded from $\ell^p(\Z)$ into itself and from $\ell^1(\Z)$ into weak-$\ell^1(\Z)$ (For the maximal function $p$ can be $\infty$).
\end{theorem}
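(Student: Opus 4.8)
The plan is to treat both operators within (vector-valued) Calderón–Zygmund theory on $\Z_h$, regarded as a space of homogeneous type with the counting measure, the decisive point being that every operator here is a convolution operator on $\Z$ because $\delta_{\rm right}$ commutes with translations. Write the Poisson semigroup as $P_tf=f*p_t$, where the discrete kernel $p_t$ is obtained from the heat kernels $G_t$ of (\ref{uu}) by Bochner subordination, i.e.\ the integral in (\ref{solution}) of Theorem \ref{gale} with the real variable $t$ in place of $z$. Then the maximal operator is $S^*f(n)=\sup_{t>0}|(f*p_t)(n)|$ and the Littlewood--Paley function is $g f(n)=\bigl(\int_0^\infty|t\partial_t(f*p_t)(n)|^2\,\tfrac{dt}{t}\bigr)^{1/2}$, the latter being the $\mathbb B$-norm of the $\mathbb B$-valued convolution $f*\mathcal K$ with $\mathbb B=L^2((0,\infty),dt/t)$ and $\mathcal K(n)=(t\partial_t p_t(n))_{t>0}$. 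A structural remark guides the estimates: since $G_s(n)=e^{-s}s^n/n!$ is the Poisson law, of mean and variance $s$, while the subordination weight concentrates at $s\sim t^2$, the kernel $p_t$ is a probability distribution supported on $n\ge 0$ and concentrated around the displacement $n\sim t^2$; thus $P_t$ samples $f$ to the right with a parabolic scaling, reflecting the forward, transport-type nature of $\delta_{\rm right}$.

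For the maximal function I would avoid singular integrals entirely. The goal is the pointwise domination $p_t(m)\le C\,t^{-2}(1+m/t^2)^{-3/2}$ for $m\ge 0$, with $p_t(m)$ negligible for $m<0$; the envelope is a decreasing, integrable one-sided majorant at the parabolic scale. Granting this, Stein's radial-decreasing-majorant principle (in its one-sided form) controls $S^*f$ by the one-sided discrete Hardy--Littlewood maximal function of $f$, which is of strong type $(p,p)$ for $1<p\le\infty$ and of weak type $(1,1)$; this settles the maximal operator, the endpoint $p=\infty$ following already from $\|p_t\|_{\ell^1(\Z_h)}=1$.

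For the square function the $\ell^2$ bound is a Plancherel computation. By the coincidence of $(\delta_{\rm right})^{\alpha}$ with Chapman's operator (\ref{Chapman}), the symbol of $P_t$ is $e^{-t\sqrt{\psi(\theta)}}$ with $\psi(\theta)=1-e^{i\theta}$; since $\arg\sqrt{\psi}\in(-\pi/4,\pi/4)$ one has $\int_0^\infty|t\partial_t e^{-t\sqrt\psi}|^2\,\tfrac{dt}{t}=\tfrac{|\sqrt\psi|^2}{4(\Real\sqrt\psi)^2}\in[\tfrac14,\tfrac12]$ uniformly in $\theta$, giving $\|gf\|_{\ell^2}\simeq\|f\|_{\ell^2}$. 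It then remains to verify the size and Hörmander conditions for $\mathcal K$ in the $\mathbb B$-norm, namely $\|\mathcal K(n)\|_{\mathbb B}\lesssim|nh|^{-1}$ and $\sum_{|n|\ge 2|m|}\|\mathcal K(n-m)-\mathcal K(n)\|_{\mathbb B}\lesssim 1$; the vector-valued Calderón--Zygmund theorem then delivers boundedness on $\ell^p(\Z_h)$ for $1<p<\infty$ together with the weak type $(1,1)$ estimate.

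The main obstacle, for both operators, is the analysis of the kernels: the uniform one-sided majorant for $p_t$ and the $\mathbb B$-valued size and smoothness bounds for $t\partial_t p_t$. All of these reduce to estimating the subordination integral against $G_s(n)=e^{-s}s^n/n!$, which is carried out by splitting into the regimes $n\ll s$, $n\sim s$ and $n\gg s$ and using Stirling's formula, together with the concentration of the subordination weight at $s\sim t^2$; the passage from $\Z$ to the mesh $\Z_h$ is then a routine rescaling. I expect the genuinely delicate point to be the smoothness estimate for the square-function kernel, where one must control the first difference in $n$ of $t\partial_t p_t(n)$ after integrating in $t$, gaining the extra decay $|n|^{-1}$ that makes the Hörmander sum converge.
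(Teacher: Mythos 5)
Your overall architecture (Plancherel plus vector-valued Calder\'on--Zygmund for the square function; a pointwise majorant for the maximal function) is the right one, and your maximal-function route is in fact a legitimate, more elementary alternative to the paper's: the paper treats $P^{\gamma,*}_{+}$ itself as a Calder\'on--Zygmund operator with values in $E=L^\infty(\R_+)$, taking $p_0=\infty$ and verifying the kernel conditions, whereas your one-sided decreasing majorant dominated by the one-sided discrete Hardy--Littlewood maximal function would bypass singular integrals entirely (modulo actually proving the envelope, and capping it for small $t$ so that its $\ell^1$ mass stays bounded). However, there are two genuine gaps on the square-function side. First, your $\ell^2$ computation assumes the symbol of $P_t$ is $e^{-t\sqrt{\psi(\theta)}}$; that is the subordination exponent $\gamma=1/2$ only, while Theorem \ref{'teo20'} concerns the whole family $P^\gamma_{t,\pm}$, $0<\gamma<1$, of (\ref{P.K.}), whose symbol is $\Gamma(\gamma)^{-1}\int_0^\infty e^{-r}e^{-\frac{t^2}{4r}\psi(\theta)}r^{\gamma-1}\,dr$ and does not close into an elementary exponential. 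Handling this oscillatory integral uniformly in $\theta$ is precisely the hardest step of the paper's proof: one rotates the contour by Cauchy's theorem inside the sector $|\arg\omega|<\pi/4$ (legitimate because $\arg\sqrt{1-e^{i\theta}}\in(-\pi/4,\pi/4)$) to land on a Macdonald function $K_{1-\gamma}(ct\sqrt{\theta}\cos\varphi_0)$ whose small- and large-argument asymptotics then give $\int_0^\infty|\widehat{t\partial_tP^\gamma_t}(\theta)|^2\frac{dt}{t}\le C_\gamma$. Your proposal contains no substitute for this step.

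Second, the H\"ormander condition for $\mathcal K(j)=\{t\partial_tP^\gamma_t(j)\}_{t>0}$ requires the full gain $\|\mathcal K(j)-\mathcal K(j+1)\|_{L^2(dt/t)}\le C j^{-2}$, and the tools you propose (Stirling plus splitting into the regimes $n\ll s$, $n\sim s$, $n\gg s$) will not produce it: writing $G_s(j+1)-G_s(j)=G_s(j)\bigl(\tfrac{s}{j+1}-1\bigr)$ and estimating by absolute values costs only a factor $\sim j^{-1/2}$ on the bulk of the Gamma density $s\mapsto G_s(j)$, i.e.\ $j^{-3/2}$ in total, and a gradient bound of order $j^{-3/2}$ makes the H\"ormander sum $\sum_{|n|\ge2|m|}\|\mathcal K(n-m)-\mathcal K(n)\|$ grow like $|m|^{1/2}$. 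The cancellation must be extracted, as in the paper, by recognizing $u^{j}\bigl(1-\tfrac{u}{j+1}\bigr)e^{-u}=\tfrac{1}{j+1}\partial_u(e^{-u}u^{j+1})$ and integrating by parts in the subordination variable, which transfers the derivative onto the smooth factor and yields $\Gamma(j)/\Gamma(j+2)\sim j^{-2}$. Without this (or an equivalent second-order argument) the Calder\'on--Zygmund machinery does not close, so the $\ell^p$, $p\ne2$, and weak $(1,1)$ conclusions for the square function remain unproved.
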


For the proof of this Theorem we shall use vector-valued Calder\'on-Zygmund theory. In short, we could say that the Calder\'on-Zygmund Theory relays in the proof of two facts.  First the boundedness of the operator in a concrete Lebesgue space $\ell^p(\Z)$ for some value of $p, 1<p\le \infty$. Secondly an estimate about the smoothness of the kernel. In our case both items are nontrivial. The key is the appropriate use of boundedness of the MacDonald function, with complex and real arguments, involving Cauchy integration. We think that these proofs will be  of independent interest for the reader.

Throughout the paper, we use the variable constant convention, in which $C$ denotes a constant which may not be the same from line to line. The constant is frequently written with subindexes to emphasize that it depends on some parameters.

\section{The discrete fractional derivatives via a semigroup language} \label{Diffusion}
\setcounter{theorem}{0}
\setcounter{equation}{0}

We shall use semigroup language as an alternative approach to these differences of fractional order.
Given the function  $G_t(n)=e^{-t}\frac{t^{n}}{n!},\, n\in\N_0$, we define the operators
\begin{equation}\label{semigrupos}T_{t,+}f(n)=\displaystyle\sum_{j=0}^\infty G_t(j)f(n+j), \hbox{ and } T_{t,-}f(n)=\displaystyle\sum_{j=0}^\infty G_t(j)f(n-j),\qquad t>0,\,n\in\Z.\end{equation}
In this section we mainly prove that $T_{t,\pm}f(n)$ are markovian semigroups on $\ell^p(\mathbb{Z}),    \, 1\leq p\leq \infty$,  whose infinitesimal generators are $-\delta_{\rm right}$ and $-\delta_{\rm left}$ and that the function $u(n,t) = T_{t,+}f(n)$ satisfies the  Cauchy problem \eqref{eq1}.


\begin{lemma}\label{'lema27'}
Let $f\in\ell^\infty(\Z)$ and  $\{T_{t,\pm}\}_{t\geq 0}$ be the families defined in (\ref{semigrupos}), then
$$
\displaystyle\lim_{t\to 0}\frac{T_{t,+}f(n)-f(n)}{t}=-\delta_{\rm right}f(n)\:\:and\:\;\displaystyle\lim_{t\to 0}\frac{T_{t,-}f(n)-f(n)}{t}=-\delta_{\rm left}f(n),\:\;n\in\Z.
$$
\end{lemma}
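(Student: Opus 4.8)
The plan is to exploit the fact that $G_t$ is a probability distribution on $\N_0$: since
\[
\sum_{j=0}^\infty G_t(j) = e^{-t}\sum_{j=0}^\infty \frac{t^j}{j!} = e^{-t}e^t = 1,
\]
the operators $T_{t,\pm}$ are averaging operators, and the difference quotient can be rewritten as a weighted average of increments. Using this normalization and discarding the (vanishing) $j=0$ term, I would first write
\[
T_{t,+}f(n) - f(n) = \sum_{j=1}^\infty G_t(j)\bigl(f(n+j) - f(n)\bigr).
\]
The heuristic is that as $t\to 0$ the Poisson mass concentrates at $j=0$, the term $j=1$ (of weight $G_t(1)=e^{-t}t$) carries the derivative, and all higher terms are negligible after division by $t$.

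Next I would isolate the $j=1$ contribution from the tail, obtaining
\[
\frac{T_{t,+}f(n) - f(n)}{t} = e^{-t}\bigl(f(n+1) - f(n)\bigr) + \frac{1}{t}\sum_{j=2}^\infty G_t(j)\bigl(f(n+j) - f(n)\bigr).
\]
The first summand converges to $f(n+1) - f(n) = -\delta_{\rm right}f(n)$ as $t\to 0$, since $e^{-t}\to 1$, which is exactly the claimed limit. It then remains only to show that the tail vanishes.

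For the tail I would use $f\in\ell^\infty(\Z)$, so that $|f(n+j) - f(n)| \leq 2\lVert f\rVert_{\ell^\infty}$, which gives
\[
\left|\frac{1}{t}\sum_{j=2}^\infty G_t(j)\bigl(f(n+j) - f(n)\bigr)\right| \leq \frac{2\lVert f\rVert_{\ell^\infty}}{t}\sum_{j=2}^\infty G_t(j).
\]
Now $\sum_{j=2}^\infty G_t(j) = 1 - G_t(0) - G_t(1) = 1 - e^{-t}(1+t)$, and a Taylor expansion yields $1 - e^{-t}(1+t) = \tfrac{t^2}{2} + O(t^3)$. Hence the tail is bounded by a constant multiple of $t$ and tends to $0$, establishing the first limit. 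The argument for $T_{t,-}$ is identical with $f(n+j)$ replaced by $f(n-j)$: the surviving term becomes $e^{-t}\bigl(f(n-1)-f(n)\bigr)\to f(n-1)-f(n) = -\delta_{\rm left}f(n)$.

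The computation is essentially routine and I do not anticipate a genuine obstacle; the only point requiring care is the tail estimate, whose crux is that the Poisson mass outside $\{0,1\}$ is of order $t^2$, so that dividing by $t$ still leaves a quantity that vanishes. The uniform boundedness of $f$ is precisely what is needed to control the infinite sum of increments in the tail.
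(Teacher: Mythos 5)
Your proposal is correct and follows essentially the same route as the paper: both isolate the $j=1$ term, which gives $e^{-t}\bigl(f(n+1)-f(n)\bigr)\to -\delta_{\rm right}f(n)$, and then show the $j\geq 2$ tail vanishes. The only cosmetic difference is that the paper dispatches the tail by dominated convergence while you give the explicit quantitative bound $\frac{2\lVert f\rVert_{\ell^\infty}}{t}\bigl(1-e^{-t}(1+t)\bigr)=O(t)$, which is equally valid.
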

\begin{proof}
Let $f\in\ell^\infty(\mathbb{Z})$ and $n\in\Z$. Observe that
\begin{align*}
\frac{T_{t,+}f(n)-f(n)}{t}&=\frac{1}{t}e^{-t}\sum_{j=0}^\infty \frac{t^j}{j!}\left( f(n+j)-f(n)\right)=e^{-t}\sum_{j=1}^\infty \frac{t^{j-1}}{j!}\left( f(n+j)-f(n)\right)\\
&=e^{-t}(f(n+1)-f(n))+e^{-t}\sum_{j=2}^\infty \frac{t^{j-1}}{j!}\left( f(n+j)-f(n)\right).
\end{align*}
 Dominated Convergence gives the first identity in the statement.
By a similar argument,
$$
\displaystyle\lim_{t\to 0}\frac{T_{t,-}f(n)-f(n)}{t}=\lim_{t\to 0}e^{-t}(f(n-1)-f(n))+\lim_{t\to 0}e^{-t}\sum_{j=2}^\infty \frac{t^{j-1}}{j!}\left( f(n-j)-f(n)\right)=-\delta_{\rm left}f(n).
$$
\end{proof}
The next Proposition shows that although the semigroups are not self adjoint, they satisfy the rest of the properties of the so called symmetric diffusion semigroups in the sense of E. M. Stein, see \cite{Stein}.
\begin{proposition}\label{'prop18'} Let $f\in \ell^{p}(\Z)$ with $1\leq p\leq \infty.$ The families of operators $\{T_{t,\pm}\}_{t\geq 0}$ satisfy \begin{itemize}
\item[(i)] $\lVert T_{t,\pm}f \rVert_{\ell^p}\leq \lVert f\rVert_{\ell^p}$ and $T_{0,\pm}f=f.$
\item[(ii)] $T_{t,\pm}T_{s,\pm}f=T_{t+s,\pm}f.$
\item[(iii)] $\lim_{t\to 0}T_{t,\pm}f=f$ on $\ell^p(\Z).$
\item[(iv)] $T_{t,\pm}f\geq 0$ if $f\geq 0.$
\item[(v)] $T_{t,\pm}1=1.$
\item[(vi)] $T^*_{t,\pm}=T_{t,\mp}$ on $\ell^2(\Z).$
\end{itemize}
\end{proposition}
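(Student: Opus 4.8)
The plan is to recognize each $T_{t,\pm}$ as convolution against the probability mass function $G_t$: since $G_t(j)=e^{-t}t^j/j!\ge 0$ and $\sum_{j=0}^\infty G_t(j)=e^{-t}\sum_{j}t^j/j!=1$, the sequence $(G_t(j))_{j\ge 0}$ is nonnegative with $\ell^1$-norm equal to one. Once this is observed, most items are formal consequences of convolution with a probability kernel, and the only genuine computation is the semigroup identity (ii).

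First I would dispose of (iv) and (v): if $f\ge 0$ then $T_{t,+}f(n)=\sum_j G_t(j)f(n+j)\ge 0$ since every term is nonnegative, and $T_{t,+}1=\sum_j G_t(j)=1$. For (i), the bound $\lVert T_{t,+}f\rVert_{\ell^p}\le \lVert f\rVert_{\ell^p}$ is Young's (equivalently Minkowski's) inequality for convolution with an $\ell^1$-kernel of unit norm, while $T_{0,+}f=f$ follows from $G_0(j)=\delta_{j,0}$. The adjoint relation (vi) is a reindexing: writing $\langle T_{t,+}f,g\rangle=\sum_n\sum_j G_t(j)f(n+j)\overline{g(n)}$ and substituting $m=n+j$ gives $\sum_m f(m)\,\overline{\sum_j G_t(j)g(m-j)}=\langle f,T_{t,-}g\rangle$, the interchange of sums being justified by absolute convergence via Cauchy--Schwarz together with $\lVert G_t\rVert_{\ell^1}=1$ for $f,g\in\ell^2(\Z)$.

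The heart of the proof is (ii). Here I would first establish the convolution identity $\sum_{k=0}^j G_t(k)G_s(j-k)=G_{t+s}(j)$, which is exactly the binomial theorem $\sum_{k=0}^j \binom{j}{k}t^k s^{j-k}=(t+s)^j$ up to the common factor $e^{-(t+s)}/j!$. Then expanding $T_{t,+}(T_{s,+}f)(n)=\sum_j G_t(j)\sum_k G_s(k)f(n+j+k)$ and collecting terms according to $m=j+k$ yields $\sum_m G_{t+s}(m)f(n+m)=T_{t+s,+}f(n)$; the computation for $T_{t,-}$ is identical. The one point requiring care is the rearrangement of the double series, which is legitimate because the terms are dominated by $G_t(j)G_s(k)\lVert f\rVert_{\ell^\infty}$, with $\lVert f\rVert_{\ell^\infty}\le\lVert f\rVert_{\ell^p}<\infty$ and $\sum_{j,k}G_t(j)G_s(k)=1$, so Fubini applies. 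This bookkeeping of the infinite sums is the only mild obstacle in the whole proposition.

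Finally, for (iii) I expect the cleanest route handles all $p$, including the delicate case $p=\infty$, simultaneously. Writing $T_{t,+}f-f=\sum_{j\ge 1}G_t(j)(\tau_j f-f)$, where $\tau_j f(\cdot)=f(\cdot+j)$ is an isometry of $\ell^p(\Z)$, the triangle inequality gives $\lVert T_{t,+}f-f\rVert_{\ell^p}\le \sum_{j\ge1}G_t(j)\,\lVert \tau_j f-f\rVert_{\ell^p}\le 2\lVert f\rVert_{\ell^p}\sum_{j\ge 1}G_t(j)=2(1-e^{-t})\lVert f\rVert_{\ell^p}\to 0$ as $t\to 0^+$. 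Since this estimate is uniform on bounded sets, it covers $p=\infty$ without invoking density of finitely supported sequences, and strong continuity follows at once.
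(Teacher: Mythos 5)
Your proposal is correct and follows essentially the same route as the paper: Minkowski's inequality for the contraction bound, Newton's binomial theorem for the semigroup property, and the estimate $\lVert T_{t,\pm}f-f\rVert_{\ell^p}\le 2\lVert f\rVert_{\ell^p}\sum_{j\ge 1}G_t(j)=2(1-e^{-t})\lVert f\rVert_{\ell^p}$ for strong continuity. The only difference is cosmetic: you frame everything via convolution with the probability kernel $G_t$ and spell out items (iv)--(vi), which the paper leaves to the reader.
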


\begin{proof} $T_{0,\pm}f=f$ by definition.
We prove the rest of the results for $T_{t,+}$ (the proof is analogous for $T_{t,-}$). Let $f\in \ell^p(\Z)$ for $1\leq p<\infty.$ By Minkowski's inequality  $$\lVert T_{t,+}f\rVert_{\ell^p}\leq e^{-t}\sum_{j=0}^{\infty}\frac{t^j}{j!}\biggl(\sum_{n\in\Z}|f(n+j)|^p\biggr)^{\frac{1}{p}}=\lVert f\rVert_{\ell^p}.$$ For $p=\infty,$ is analogous.
In order to prove (ii) we use the Newton's binomial, \begin{eqnarray*}
T_{t,+}T_{s,+}f(n)&=&e^{-(t+s)}\displaystyle\sum_{j=0}^{\infty}\frac{t^j}{j!}\sum_{h=0}^{\infty}\frac{s^h}{h!}f(n+j+h)=e^{-(t+s)}\displaystyle\sum_{j=0}^{\infty}\frac{t^j}{j!}\sum_{u=j}^{\infty}\frac{s^{u-j}}{(u-j)!}f(n+u)\\
&=&e^{-(t+s)}\displaystyle\sum_{u=0}^{\infty}\frac{f(n+u)}{u!}\sum_{j=0}^{u}\binom{u}{j}t^js^{u-j}=T_{t+s,+}f(n).
\end{eqnarray*}

For (iii)  we use that $f(n+j)-f(n)=0$ for $j=0,$ and Minkowski's inequality to get    \begin{eqnarray*}
\lVert T_{t,\pm}f-f\rVert_{\ell^p}&=&\biggl(\displaystyle\sum_{n\in\Z}\left|e^{-t}\sum_{j=1}^{\infty}\frac{t^j}{j!}(f(n+j)-f(n))\right|^p\biggr)^{\frac{1}{p}} \\
&\leq& e^{-t}\displaystyle\sum_{j=1}^{\infty}\biggl( \sum_{n\in\Z}\left|\frac{t^j}{j!}(f(n+j)-f(n))\right|^p \biggr)^{\frac{1}{p}} \\
&\leq&2e^{-t}\lVert f\rVert_{\ell^p}\displaystyle\sum_{j=1}^{\infty}\frac{t^j}{j!}\to 0
\end{eqnarray*} as $t\to 0.$ For $p=\infty$ is similar. We leave the verification of  (iv),  (v) and (vi) to the reader.\end{proof}

\begin{remark}\label{semigrupos2}
By Lemma \ref{'lema27'} and Proposition \ref{'prop18'} (i)-(iii), observe that the one-parameter operator families $\{T_{t,\pm}\}_{t\geq 0}$ are uniformly bounded $C_0$-semigroups on $\ell^p(\Z)$ for $1\leq p\leq \infty,$ generated by $-\delta_{\rm right/left},$ in the sense of the operator theory, see \cite{ABHN}. Furthermore, it is easy to see that the domain of $\delta_{\rm right/left}$ on $\ell^p(\Z)$ is the whole space.
\end{remark}

Now we shall see that the function $u(n,t) = T_{t,+}f(n)= \sum_{j\ge 0} G_t(j) f(n+j)$ is a solution of (\ref{eq1}). Observe that
$$\frac{\partial}{\partial t}G_t(0)=-G_t(0),\quad \hbox{and } \, \frac{\partial}{\partial t}(G_t(j))=-G_t(j)+G_t(j-1),\quad j\geq 1.$$
On the other hand, for any $A>0$,  $\sum_j \frac{A^j}{ j!} \le C_A < \infty$, hence we can differentiate term by term the series and we have
\begin{align*}
\partial_tT_{t,+}f(n)&=-G_t(0)f(n)+\sum_{j=1}^\infty(-G_t(j)+G_t(j-1))f(n+ j)\\
&=-\sum_{j\geq 0}G_t(j)f(n+ j) +\sum_{j\geq 0}G_t(j)f(n+(j+1))=-\delta_{\rm right} T_tf(n).
\end{align*}
The proof of the result for $\delta_{\rm left}$ is analogous.

 Once we have enclosed the fractional differences into the frame of semigroups, we take advantage of the method to highlight some properties and interesting results of these operators. We recall to the reader the following Gamma function formulas for an operator $L$.
\begin{equation}\label{'eq19'}
L^\alpha=\frac{1}{\Gamma(-\alpha)}\int_0^\infty(e^{-t L}-1)\frac{dt}{t^{1+\alpha}} \quad \hbox{  and  }\quad
L^{-\alpha}=\frac{1}{\Gamma(\alpha)}\int_0^\infty e^{-t L}\frac{dt}{t^{1-\alpha}},
\end{equation}
where $e^{-t L}$ is the associated semigroup, see \cite{Bernardis,  Stein, ST,Yosida}.
 In particular, we can define
\begin{align*}
(\delta_{\rm right})^\alpha f(n)=\frac{1}{\Gamma(-\alpha)}\int_0^\infty \frac{T_{t,+}f(n) -f(n)}{t^{1+\alpha}}dt, \  0< \alpha < 1,
\end{align*}
and
\begin{align*}
(\delta_{\rm right})^{-\alpha} f(n)=\frac{1}{\Gamma(\alpha)}\int_0^\infty \frac{T_{t,+}f(n)}{t^{1-\alpha}}dt, \  0< \alpha < 1,
\end{align*}
and the corresponding formula for $(\delta_{\rm left})^\alpha, \, -1< \alpha <1.$
We shall prove that this definition of $(\delta_{\rm right})^\alpha$ coincides with the definition (\ref{Chapman}) given by Chapman in 1911.

Along this paper, for $\alpha\in\R$, we denote $\Lambda^\alpha(m)=\binom{-\alpha-1+m}{m}=(-1)^m \binom{\alpha}{m} $, $m\in\N_0$.
The sequences $\{\Lambda^{\alpha}(n)\}_{n\in \N_0}$ have been studied in a more general setting in \cite{ALMV, Zygmund}. Here we highlight some properties of this kernel. For $0<\alpha<1,$ we have $\sum_{n=0}^{\infty}\Lambda^{\alpha}(n)=0,$ $\sum_{n=1}^{\infty}\Lambda^{\alpha}(n)=-1.$ In addition, we can observe that $\Lambda^{\alpha}(n)<0$ for $n\in\N,$ and \begin{equation}\label{asym}\Lambda^{\alpha}(n)=\frac{1}{n^{\alpha+1}\Gamma(-\alpha)}\left(1+O\left({1\over n}\right)\right),\end{equation}see \cite{Samko,Zygmund}.

Then
\begin{align*}
(\delta_{\rm right})^\alpha f(n)&=\frac{1}{\Gamma(-\alpha)}\int_0^\infty \frac{e^{-t}\sum_{j=0}^\infty \frac{t^j}{j!}(f(n+j)-f(n))}{t^{1+\alpha}}dt=\sum_{j=0}^\infty(f(n+j)-f(n))\int_0^\infty\frac{e^{-t}t^{j-\alpha}}{j!\Gamma(-\alpha)}\frac{dt}{t}\\
&=\sum_{j=0}^\infty(f(n+j)-f(n)) \frac{\Gamma(-\alpha+j)}{\Gamma(-\alpha)j!}=\sum_{j=0}^\infty\Lambda^\alpha(j) f(n+j),
\end{align*}
where the interchange of the sum and the integral is justified because of the integral converges absolutely. In the last equality we have used that  $\sum_{j=0}^\infty \Lambda^\alpha(j) = 0$.
By a similar way we also get
$$(\delta_{\rm left})^\alpha=\sum_{j=0}^\infty\Lambda^\alpha(j) f(n-j).$$

\begin{remark}
The above expression of $(\delta_{\rm right/left})^\alpha$ coincides with the formula of the fractional powers of $\delta_{\rm right/left}$ as generators of uniformly bounded $C_0$-semigroups on $\ell^p(\Z),$ in the sense of Balakrishnan, see \cite[Chapter IX, Section 11]{Yosida}.
\end{remark}

\begin{remark}(Probabilistic interpretation). Let $u$ be a function defined on $\Z_h$ such that its progressive difference is zero, which is equivalent to write $$u(jh)=u((j+1)h),\quad j\in\Z.$$ This implies that the discrete function $u$ is constant. We can interpret the above identity as the movement of a particle that compulsorily jumps to the adjacent right point on the mesh. If now we suppose that $(\delta_{\rm right})^{\alpha}u=0,$ then $$u(jh)=-\displaystyle\sum_{n=1}^{\infty}\Lambda^{\alpha}(n)u((j+n)h).$$ Since $-\sum_{n=1}^{\infty}\Lambda^{\alpha}(n)=1,$ the fractional identity can describe the movement of a particle which is able to jump to the right points $j+n$ with probability $-\Lambda^{\alpha}(n).$ It is easy to see that we recover the first situation as $\alpha\to 1^{-}.$ If $\alpha\to 0^+,$ the particle tends to be still.
\end{remark}
\section{Properties from a PDE point of view}\label{PDE}

\setcounter{theorem}{0}
\setcounter{equation}{0}

In this section we study the fractional discrete differences, and their regularity on the discrete H\"older spaces.

In the previous section we have considered functions defined on $\Z.$ Now we work on $\Z_h=h\Z,$ for $h>0,$ since one of our main objectives will be to compare the fractional discrete differences on $\Z_h$ with the discretized continuous fractional derivatives as $h\to 0.$ This will be done in the next section. Let $u:\Z_h\to\R,$ the first order difference operators on $\Z_{h}$ are given by
$$
\delta_{\rm right}u(hn)=\frac{u(hn)-u(h(n+1))}{h},\quad \delta_{\rm left}u(hn)=\frac{u(hn)-u(h(n-1))}{h},\quad n\in\Z.
$$

Notice that $\{T_{\frac{t}{h},\pm}\}_{t\geq0}$ are the contraction semigroups on $\ell^{p}(\Z_h)$, $1\leq p\leq\infty$, generated by $-\delta_{\rm right}$ and $-\delta_{\rm left}.$ Then, by the results of the last sections  we can write
$$
(\delta_{\rm right})^{\alpha}u(nh)=\frac{1}{h^{\alpha}}\sum_{j=0}^{\infty}(u((n+j)h)-u(nh))\Lambda^\alpha(j).
$$

and
$$
(\delta_{\rm left})^{\alpha}u(nh)=\frac{1}{h^{\alpha}}\sum_{j=0}^{\infty}(u((n-j)h)-u(nh))\Lambda^\alpha(j).
$$
Now we shall prove the maximum and comparison principles for the fractional differences $(\delta_{\rm right})^{\alpha}$ and the uniqueness of the corresponding Dirichlet problems stated in the Introduction as Theorem  \ref{'teo22'} and
 Theorem \ref{'cor23'}. Observe that the statements and proofs for $\delta_{\rm left}$ are analogous.
\vspace{0.5cm}

{\it Proof of Theorem \ref{'teo22'}:}
Part (ii) is a straightforward consequence of (i). For (i) we write $(\delta_{\rm right})^{\alpha}u(j_0h)=\frac{1}{h^{\alpha}}\displaystyle\sum_{m\in\N_0}\Lambda^{\alpha}(m)u((j_0+m)h)=\frac{1}{h^{\alpha}}\displaystyle\sum_{m\in\N}\Lambda^{\alpha}(m)u((j_0+m)h)\leq0,$ since $\Lambda^{\alpha}(m)<0$ for all $m\in\N.$ Moreover, by the same argument, if $(\delta_{\rm right})^{\alpha}u(j_0h)=0$ then $u((j_0+m)h)=0$ for all $m\in\N.$
$\hfill {\Box}$
\vspace{0.5cm}

{\it Proof of Theorem \ref{'cor23'}:}
We prove part (i) by contradiction. We suppose that there exists $m\in [j_0,j_1)$ such that $u(mh)<0$ is the global minimum of $u$ in $[j_0,\infty)$. So $u(jh)-u(mh)\geq0$ for all $j\geq j_0,$ then, by the maximum principle, $(\delta_{\rm right})^{\alpha}(u-u(mh))(mh)=(\delta_{\rm right})^{\alpha}u(mh)\leq 0.$ On the one hand, if $(\delta_{\rm right})^{\alpha}u(mh)=0,$ then $u(jh)=u(mh)<0$ for all $j\geq m,$ which contradicts that $u(jh)=0$ for all $j\geq j_1.$ On the other hand, if $(\delta_{\rm right})^{\alpha}u(mh)<0,$ this contradicts the hypothesis on $f$.

For part (ii), we use again an argument of contradiction. We suppose that $\sup_{j\geq j_0}u(jh)$ is not attained in $[j_1,\infty).$ Then there exists $m\in[j_0,j_1)$ such that $u(mh)$ is the global maximum of $u$ in $[j_0,\infty).$ So $u(mh)-u(jh)\geq0$ for all $j\geq j_0,$ then, by the maximum principle, $(\delta_{\rm right})^{\alpha}(u(mh)-u)(mh)=-(\delta_{\rm right})^{\alpha}u(mh)\leq 0$, i.e, $(\delta_{\rm right})^{\alpha}u(mh)\geq 0$. If $(\delta_{\rm right})^{\alpha}u(mh)>0$, it contradicts that $(\delta_{\rm right})^{\alpha}u\leq 0$ in $[j_0,j_1)$ and if $(\delta_{\rm right})^{\alpha}u(mh)=0,$ then, by the maximum principle, $u(jh)=u(hm)$ for all $j\geq m,$ so the $\sup_{j\geq j_0}u(jh)$ is attained in $[j_1,\infty).$

Part (ii) implies part (iii) by taking $-u$. Finally, part (iv) is a consequence of (iii), and the uniqueness is a straightforward consequence.
$\hfill {\Box}$
\vspace{0.5cm}

The regularity results announced in the Introduction are based in the behavior of our fractional powers when acting  over some H\"older spaces that we define now. Following the notation in \cite{Ciaurri}, for $l,s\in\N_0,$ we denote $\delta_{\rm right, \rm left}^{l,s}:=(\delta_{\rm right})^{l}(\delta_{\rm left})^{s}.$

\begin{definition}\textrm{(\cite[Definition 2.1]{Ciaurri}).} Let $0<\beta\leq 1$ and $k\in\N_0.$ A function $u:\Z_{h} \to \R$ belongs to the discrete H\"older space $C_{h}^{k,\beta}$ if $$[\delta_{\rm right, \rm left}^{l,s}u]_{C_{h}^{0,\beta}}=\displaystyle\sup_{m\neq j}\frac{|\delta_{\rm right, \rm left}^{l,s}u(jh)-\delta_{\rm right, \rm left}^{l,s}u(hm)|}{h^{\beta}|j-m|^{\beta}}<\infty$$ for each pair $l,s\in\N_0$ such that $l+s=k.$ The norm in the spaces $C_{h}^{k,\beta}$ is given by $$\lVert u\rVert_{C_{h}^{k,\beta}}=\displaystyle\max_{l+s\leq k}\sup_{m\in\Z}|\delta_{\rm right, \rm left}^{l,s}u(mh)|+\max_{l+s=k}[\delta_{\rm right, \rm left}^{l,s}u]_{C_{h}^{0,\beta}}.$$
\end{definition}

For simplicity, we only write the following theorem for $(\delta_{\rm right})^{\alpha}$ since it is analogous for $(\delta_{\rm left})^{\alpha}.$

\begin{theorem} \label{'teo24'}
Let $0< \beta\leq 1$ and $0<\alpha<1.$ \begin{itemize}
\item[(i)] Let $u\in C_{h}^{0,\beta}$ and $\alpha<\beta.$ Then $(\delta_{\rm right})^{\alpha}u\in C_{h}^{0,\beta-\alpha}$ and $$\lVert (\delta_{\rm right})^{\alpha}u \rVert_{C_{h}^{0,\beta-\alpha}}\leq C\lVert u \rVert_{C_{h}^{0,\beta}}.$$
\item[(ii)] Let $u\in C_{h}^{1,\beta}$ and $\alpha<\beta.$ Then $(\delta_{\rm right})^{\alpha}u\in C_{h}^{1,\beta-\alpha}$ and $$\lVert (\delta_{\rm right})^{\alpha}u \rVert_{C_{h}^{1,\beta-\alpha}}\leq C\lVert u \rVert_{C_{h}^{1,\beta}}.$$
\item[(iii)] Let $u\in C_{h}^{1,\beta}$ and $\alpha>\beta.$ Then $(\delta_{\rm right})^{\alpha}u\in C_{h}^{0,\beta-\alpha+1}$ and $$\lVert (\delta_{\rm right})^{\alpha}u \rVert_{C_{h}^{0,\beta-\alpha+1}}\leq C\lVert u \rVert_{C_{h}^{1,\beta}}.$$
\item[(iv)] Let $u\in C_{h}^{k,\beta}$ and assume that $k+\beta-\alpha$ is not an integer, with $\alpha<k+\beta.$ Then $(\delta_{\rm right})^{\alpha}u\in C_{h}^{l,s}$ where $l$ is the integer part of $k+\beta-\alpha$ and $s=k+\beta-\alpha-l$.
\end{itemize}
The positive constants $C$ are independent of $h$ and $u.$
\end{theorem}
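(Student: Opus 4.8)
The plan is to work throughout from the explicit representation
$$(\delta_{\rm right})^{\alpha}u(nh)=\frac{1}{h^{\alpha}}\sum_{k=1}^{\infty}\Lambda^{\alpha}(k)\bigl(u((n+k)h)-u(nh)\bigr),$$
together with the two facts recorded before the statement: $\Lambda^{\alpha}(k)<0$ for $k\ge 1$ and $|\Lambda^{\alpha}(k)|\le C\,k^{-\alpha-1}$ by \eqref{asym}. Everything reduces to estimating, for mesh points $m<j$ with $d:=j-m$, the \emph{double difference}
$$B_k:=\bigl(u((j+k)h)-u(jh)\bigr)-\bigl(u((m+k)h)-u(mh)\bigr),$$
which can be read either as $[u((j+k)h)-u((m+k)h)]-[u(jh)-u(mh)]$ (two increments over the gap $d$) or as it stands (two increments over the gap $k$).

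For (i) I would first prove boundedness by cutting the series at the scale $k\simeq 1/h$: on $k\le 1/h$ use $|u((n+k)h)-u(nh)|\le[u]_{C_h^{0,\beta}}h^{\beta}k^{\beta}$ with $\sum_{k\le 1/h}k^{\beta-\alpha-1}\simeq (1/h)^{\beta-\alpha}$ (growth since $\beta>\alpha$), while on $k>1/h$ use $|u((n+k)h)-u(nh)|\le 2\lVert u\rVert_{\ell^{\infty}}$ with $\sum_{k>1/h}k^{-\alpha-1}\simeq (1/h)^{-\alpha}$; both pieces yield the $h$-free bound $C\lVert u\rVert_{C_h^{0,\beta}}$. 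For the seminorm I use $|B_k|\le 2[u]_{C_h^{0,\beta}}h^{\beta}\min(d,k)^{\beta}$ (the first reading for $k\ge d$, the second for $k\le d$), so that
$$|(\delta_{\rm right})^{\alpha}u(jh)-(\delta_{\rm right})^{\alpha}u(mh)|\le \frac{C}{h^{\alpha}}[u]_{C_h^{0,\beta}}h^{\beta}\sum_{k\ge1}\frac{\min(d,k)^{\beta}}{k^{\alpha+1}},$$
and splitting the sum at $k=d$ gives $\sum\simeq d^{\beta-\alpha}$, which is exactly the $C_h^{0,\beta-\alpha}$ estimate.

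For (ii) the key point is that $(\delta_{\rm right})^{\alpha}$ commutes with $\delta_{\rm right}$ and $\delta_{\rm left}$, all three being shift-invariant (convolution) operators; hence $\delta_{\rm right,\rm left}^{1,0}(\delta_{\rm right})^{\alpha}u=(\delta_{\rm right})^{\alpha}(\delta_{\rm right}u)$ and similarly for $\delta_{\rm right,\rm left}^{0,1}$. Since $u\in C_h^{1,\beta}$ gives $\delta_{\rm right}u,\delta_{\rm left}u\in C_h^{0,\beta}$, part (i) applied to these produces the top-order seminorms, and a short argument (Lipschitz control when $|j-m|h\le 1$, boundedness otherwise) gives the embedding $C_h^{1,\beta}\hookrightarrow C_h^{0,\beta}$ needed to bound $\lVert(\delta_{\rm right})^{\alpha}u\rVert_{\ell^{\infty}}$.

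Part (iii) is where I expect the real work, because now $\alpha>\beta$ and the naive $C_h^{0,\beta}$ estimate of (i) diverges; one must exploit the first difference. Writing $u((n+k)h)-u(nh)=-h\sum_{i=0}^{k-1}\delta_{\rm right}u((n+i)h)$ with $w:=\delta_{\rm right}u\in C_h^{0,\beta}$, the double difference admits the two complementary bounds $|B_k|\le [w]_{C_h^{0,\beta}}h^{1+\beta}d^{\beta}k$ (summing $k$ increments of $w$ over the gap $d$) and $|B_k|\le [w]_{C_h^{0,\beta}}h^{1+\beta}d\,k^{\beta}$ (summing $d$ increments of $w$ over the gap $k$). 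Using the first for $k\le d$ and the second for $k>d$, and noting that $\beta<\alpha$ makes the tail $\sum_{k>d}k^{\beta-\alpha-1}$ converge to $\simeq d^{\beta-\alpha}$, both ranges contribute $\simeq h^{1+\beta}d^{1+\beta-\alpha}$; dividing by $h^{\alpha}$ gives precisely the $C_h^{0,1+\beta-\alpha}$ seminorm. Sup-boundedness follows from the same $1/h$-cut as in (i), now with the Lipschitz bound $hk\lVert w\rVert_{\ell^{\infty}}$ on the near range.

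Finally (iv) follows by induction from the base cases using commutativity: for a multi-index with $a+b=l$ one has $\delta_{\rm right,\rm left}^{a,b}(\delta_{\rm right})^{\alpha}u=(\delta_{\rm right})^{\alpha}\delta_{\rm right,\rm left}^{a,b}u$ with $\delta_{\rm right,\rm left}^{a,b}u\in C_h^{k-l,\beta}$, and the constraint $\alpha<1$ forces the residual integer order $k-l$ to be either $0$ or $1$, so exactly case (i) or case (iii) applies and yields the required $s=k+\beta-\alpha-l$ seminorm; the lower-order terms $a+b<l$ only need the boundedness estimates already obtained. The main obstacle is thus the sharp second-difference bookkeeping in (iii), namely choosing the two readings of $B_k$ and the cut at $k=d$ so that the exponent comes out exactly $1+\beta-\alpha$.
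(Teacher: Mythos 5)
Your proposal is correct and follows essentially the same route as the paper: the seminorm is controlled by splitting the series at $k=|j-m|$ and using the two complementary readings of the double difference (which is exactly the paper's $I_1$/$I_2$ decomposition, with the telescoping through $\delta_{\rm right}u$ in case (iii)), while (ii) and (iv) follow from commutativity and iteration. The only addition is your explicit $\ell^\infty$ bound via the cut at $k\simeq 1/h$, a detail the paper leaves implicit but which is handled correctly here.
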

\begin{proof}

Let $j,l\in\Z.$ We write $$|(\delta_{\rm right})^{\alpha}u(jh)-(\delta_{\rm right})^{\alpha}u(lh)|=\frac{1}{h^{\alpha}}|I_1+I_2|,$$ with $\displaystyle I_1=\displaystyle\sum_{1\leq m\leq |j-l|}(u((j+m)h)-u(jh)-u((l+m)h)+u(lh))\Lambda^{\alpha}(m),$ and \newline $\displaystyle I_2=\displaystyle\sum_{m> |j-l|}(u((j+m)h)-u(jh)-u((l+m)h)+u(lh))\Lambda^{\alpha}(m).$

To prove (i), note that $$|I_1|\leq C_{\alpha}h^{\beta}[u]_{C_h^{0,\beta}}\displaystyle\sum_{1\leq m\leq |j-l|}\frac{m^{\beta}}{m^{\alpha+1}}\leq C_{\alpha}h^{\beta}[u]_{C_h^{0,\beta}}|j-l|^{\beta-\alpha},$$ where we have used \eqref{asym}. For $I_2,$ observe that $$|u(jh)-u(lh))|,|u((j+m)h)-u((l+m)h)|\leq h^{\beta}[u]_{C_h^{0,\beta}}|j-l|^{\beta},$$ then $$|I_2|\leq C_{\alpha}h^{\beta}[u]_{C_h^{0,\beta}}|j-l|^{\beta}\displaystyle\sum_{m> |j-l|}\frac{1}{m^{\alpha+1}}\leq C_{\alpha}h^{\beta}[u]_{C_h^{0,\beta}}|j-l|^{\beta-\alpha},$$ using again \eqref{asym}.

Part (ii) is a straightforward consequence of (i). By definition, if $u\in C^{1,\beta}_h$, then $\delta_{\rm right}u\in C^{0,\beta}_h$, and as $\delta_{\rm right, \rm left}$ commutes with $(\delta_{\rm right})^{\alpha},$ by using (i) we get $\delta_{\rm right/left}(\delta_{\rm right})^{\alpha}u\in C_h^{0,\beta-\alpha}.$

For part (iii), suppose without loss of generality that $j>l$. We can write $\displaystyle u((j+m)h)-u(jh)=-h\displaystyle\sum_{p=0}^{m-1}\delta_{\rm right}u((j+p)h).$ Then
 \begin{align*}
|I_1|&\leq h\displaystyle\sum_{1\leq m\leq |j-l|}\sum_{p=0}^{m-1}|\delta_{\rm right}u((j+p)h)-\delta_{\rm right}u((l+p)h)|\Lambda^{\alpha}(m) \\
&\leq C_{\alpha}h^{\beta+1}\lVert u\rVert_{C_h^{1,\beta}}|j-l|^{\beta}\displaystyle\sum_{1\leq m\leq |j-l|}\frac{1}{m^{\alpha}}\leq  C_{\alpha}h^{\beta+1}\lVert u\rVert_{C_h^{1,\beta}}|j-l|^{\beta+1-\alpha}.
\end{align*}
To bound $I_2$ we write $\displaystyle u((j+m)h)-u((l+m)h)=-h\displaystyle\sum_{p=0}^{|j-l|-1}\delta_{\rm right}u((l+m+p)h),$ then  \begin{align*}
|I_2|&\leq h\displaystyle\sum_{m> |j-l|}\sum_{p=0}^{|j-l|-1}|\delta_{\rm right}u((l+m+p)h)-\delta_{\rm right}u((l+p)h)|\Lambda^{\alpha}(m) \\
&\leq C_{\alpha}h^{\beta+1}\lVert u\rVert_{C_h^{1,\beta}}|j-l|\displaystyle\sum_{m> |j-l|}\frac{m^{\beta}}{m^{\alpha}}\leq C_{\alpha}h^{\beta+1}\lVert u\rVert_{C_h^{1,\beta}}|j-l|^{\beta+1-\alpha}.
\end{align*}
Iterating parts (i), (ii) and (iii) we  get (iv).
\end{proof}

\section{Approximation of fractional derivatives in the line by discrete fractional derivatives. Marchaud and  Gr\"unwald-Letnikov fractional derivatives}
\setcounter{theorem}{0}
\setcounter{equation}{0}

Now we compare the fractional discrete differences and the discretized continuous fractional derivatives on H\"older spaces, and we estimate the error of the approximation on $\ell^{\infty}(\Z)$.

In \cite{Bernardis}, the fractional powers of the derivatives from the right and the left are considered, where $$D_{\text{right}}f(x)=\displaystyle\lim_{t\to 0^+}\frac{f(x)-f(x+t)}{t}\,  \hbox{  and  }  \, D_{\text{left}}f(x)=\displaystyle\lim_{t\to 0^+}\frac{f(x)-f(x-t)}{t}$$ are the continuous derivatives from the right and from the left, respectively. We recall that the fractional derivatives for $0<\alpha<1$ are given by $$(D_{\text{right/left}})^{\alpha}f(x)=\frac{1}{\Gamma(-\alpha)}\int_0^{\infty}\frac{f(x\pm t)-f(x)}{t^{1+\alpha}}\,dt,$$
for sufficiently smooth functions $f$.

Also, recall that a continuous real function $u$ belongs to the H\"older space $C^{k,\beta}(\R)$ if $u\in C^k(\R)$ and $$[u^{(k)}]_{C^{0,\beta}(\R)}=\displaystyle\sup_{x\neq y\in\R}\frac{|u^{(k)}(x)-u^{(k)}(y)|}{|x-y|^{\beta}}<\infty,$$ where $u^{(k)}$ denotes the $k$-th derivative of $u.$ The norm in the spaces $C^{k,\beta}(\R)$ is $$\lVert u\rVert_{C^{k,\beta}(\R)}=\displaystyle\sum_{l=0}^k \lVert u^{(l)}\rVert_{L^{\infty}(\R)}+[u^{(k)}]_{C^{0,\beta}(\R)}.$$

We only compare $(\delta_{\rm right})^{\alpha}$ with $(D_{\text{right}})^{\alpha}$ but the result and the proof are analogous for $(\delta_{\rm left})^{\alpha}$ and $(D_{\text{left}})^{\alpha}.$
To prove Theorem \ref{'teo26'} we need a previous lemma.

\begin{lemma} \label{'lema25'}Let $0<\alpha<1$ and $j\in \Z.$ We have \begin{equation}\label{lemma(1)}
\left|\frac{1}{\Gamma(-\alpha)}\int_{(j+m)h}^{(j+m+1)h}\frac{dt}{(t-jh)^{1+\alpha}}-\frac{\Lambda^{\alpha}(m)}{h^{\alpha}}\right|\leq \frac{C_{\alpha}}{h^{\alpha}m^{2+\alpha}},\quad m\in\N,
\end{equation} and \begin{equation}\label{lemma(2)}
\int_{(j+m)h}^{(j+m+1)h}\frac{dt}{(t-jh)^{1+\alpha}}\leq \frac{C_{\alpha}}{h^{\alpha}m^{1+\alpha}},\quad m\in\N.
\end{equation}
\end{lemma}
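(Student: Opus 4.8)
The plan is to first eliminate the parameters $j$ and $h$ by elementary changes of variables, and then to see that both inequalities are really statements about the single numerical quantity $\int_m^{m+1}u^{-1-\alpha}\,du$. Substituting $s=t-jh$ removes the dependence on $j$ and turns the integral into $\int_{mh}^{(m+1)h}s^{-1-\alpha}\,ds$; the further scaling $s=hu$ extracts the factor $h^{-\alpha}$ and leaves
$$\int_{(j+m)h}^{(j+m+1)h}\frac{dt}{(t-jh)^{1+\alpha}}=\frac{1}{h^{\alpha}}\int_m^{m+1}\frac{du}{u^{1+\alpha}}=\frac{1}{\alpha h^{\alpha}}\bigl(m^{-\alpha}-(m+1)^{-\alpha}\bigr).$$
Thus in both \eqref{lemma(1)} and \eqref{lemma(2)} the factor $h^{-\alpha}$ is common to every term and never enters the analysis; everything reduces to estimating $\int_m^{m+1}u^{-1-\alpha}\,du$ against $m^{-1-\alpha}$ and against $\Lambda^\alpha(m)$.

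For the second bound \eqref{lemma(2)} I would simply use that $u\mapsto u^{-1-\alpha}$ is decreasing, so $\int_m^{m+1}u^{-1-\alpha}\,du\le m^{-1-\alpha}$; equivalently, the mean value theorem gives $m^{-\alpha}-(m+1)^{-\alpha}=\alpha\xi^{-1-\alpha}\le\alpha m^{-1-\alpha}$ for some $\xi\in(m,m+1)$. Either way \eqref{lemma(2)} follows at once with a constant independent of $m$ and $h$.

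The first bound \eqref{lemma(1)} is the substantive one, and the whole point is a cancellation of leading-order terms. On the integral side, $m^{-1-\alpha}$ is the left-endpoint quadrature approximation of $\int_m^{m+1}u^{-1-\alpha}\,du$, and the quadrature error is controlled by applying the mean value theorem to $u^{-1-\alpha}$ on $[m,m+1]$, yielding
$$\left|\int_m^{m+1}\frac{du}{u^{1+\alpha}}-m^{-1-\alpha}\right|\le\frac{\alpha+1}{2}\,m^{-2-\alpha}.$$
On the coefficient side, the asymptotics \eqref{asym} say precisely that $\Lambda^\alpha(m)=\tfrac{1}{\Gamma(-\alpha)}m^{-1-\alpha}+O(m^{-2-\alpha})$, with leading constant exactly $\tfrac{1}{\Gamma(-\alpha)}$. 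Dividing the quadrature estimate by $\Gamma(-\alpha)$ and subtracting $\Lambda^\alpha(m)$, the two $\tfrac{1}{\Gamma(-\alpha)}m^{-1-\alpha}$ terms cancel, and what survives is the sum of two $O(m^{-2-\alpha})$ errors; restoring the common factor $h^{-\alpha}$ gives the asserted bound $C_\alpha h^{-\alpha}m^{-2-\alpha}$.

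The main obstacle is exactly this cancellation: a naive estimate, using only that both $\tfrac{1}{\Gamma(-\alpha)}\int_m^{m+1}u^{-1-\alpha}\,du$ and $\Lambda^\alpha(m)$ are $O(m^{-1-\alpha})$, yields merely $O(m^{-1-\alpha})$ for their difference, which is one power of $m$ too weak. To gain that extra power one must verify that both expressions share the \emph{same} principal part $\tfrac{1}{\Gamma(-\alpha)}m^{-1-\alpha}$, which is guaranteed by matching the leading constant in \eqref{asym} with the leading term of the left-endpoint rule. A minor point is that \eqref{lemma(1)} is claimed for every $m\in\N$ while \eqref{asym} is asymptotic; since only finitely many small $m$ are left uncovered and every term involved is finite there, enlarging $C_\alpha$ handles them, so uniformity in $m$ (and the clean factorization of $h^{-\alpha}$) causes no trouble.
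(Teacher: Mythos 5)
Your proposal is correct and follows essentially the same route as the paper: the same change of variables reducing everything to $h^{-\alpha}\int_m^{m+1}u^{-1-\alpha}\,du$, the same splitting of \eqref{lemma(1)} into a quadrature error (handled by the mean value theorem) plus the deviation of $\Lambda^\alpha(m)$ from $\tfrac{1}{\Gamma(-\alpha)}m^{-1-\alpha}$ (handled by \eqref{asym}), and the same monotonicity bound for \eqref{lemma(2)}. Your remark about enlarging $C_\alpha$ to cover the finitely many small $m$ not controlled by the asymptotic form of \eqref{asym} is a detail the paper leaves implicit, and it is handled correctly.
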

\begin{proof}
By doing the change of variable $t-jh=zh$ we have \begin{displaymath}\begin{array}{l}
\displaystyle\left|\frac{1}{\Gamma(-\alpha)}\int_{(j+m)h}^{(j+m+1)h}\frac{1}{(t-jh)^{1+\alpha}}\,dt-\frac{\Lambda^{\alpha}(m)}{h^{\alpha}}\right|=\left|\frac{1}{h^{\alpha}\Gamma(-\alpha)}\int_{m}^{m+1}\frac{dz}{z^{1+\alpha}}-\frac{\Lambda^{\alpha}(m)}{h^{\alpha}}\right| \\ \\
\leq \displaystyle\left|\frac{1}{h^{\alpha}\Gamma(-\alpha)}\int_{m}^{m+1}\biggl(\frac{1}{z^{1+\alpha}}-\frac{1}{m^{1+\alpha}}\biggr)dz\right|+\frac{1}{h^{\alpha}}\left|\frac{1}{\Gamma(-\alpha)m^{1+\alpha}}-\Lambda^{\alpha}(m)\right|.
\end{array}\end{displaymath}
The Mean Value Theorem implies
$$
\left|\int_{m}^{m+1}\biggl(\frac{1}{z^{1+\alpha}}-\frac{1}{m^{1+\alpha}}\biggr)dz\right|\leq C_{\alpha}\left|\int_{m}^{m+1}\frac{1}{m^{2+\alpha}}dz\right|\leq \frac{C_{\alpha}}{m^{2+\alpha}},$$
and
 $$\left|\frac{1}{\Gamma(-\alpha)m^{1+\alpha}}-\Lambda^{\alpha}(m)\right|\leq \frac{C_{\alpha}}{m^{2+\alpha}}
 $$
  by \eqref{asym}. So, \eqref{lemma(1)} is proved.
For \eqref{lemma(2)}, we have $$\int_{(j+m)h}^{(j+m+1)h}\frac{dt}{(t-jh)^{1+\alpha}}\leq \int_{(j+m)h}^{(j+m+1)h}\frac{dt}{(mh)^{1+\alpha}}\leq \frac{C_{\alpha}}{h^{\alpha}m^{1+\alpha}}.$$
\end{proof}

{\it Proof of Theorem \ref{'teo26'}:}

We suppose the hypothesis of part (i). Let $j\in\Z,$ then \begin{eqnarray*}
r_h ((D_{\text{right}})^{\alpha}u)(j)&=&\displaystyle \frac{1}{\Gamma(-\alpha)}\sum_{m\in\N_0}\int_{(j+m)h}^{(j+m+1)h}\frac{u(t)-u(jh)}{(t-jh)^{1+\alpha}}\,dt \\
&=&\frac{1}{\Gamma(-\alpha)}\biggl( \sum_{m\in\N_0}\int_{(j+m)h}^{(j+m+1)h}\frac{u(t)-u((j+m)h)}{(t-jh)^{1+\alpha}}\,dt \\
&&+\sum_{m\in\N}\int_{(j+m)h}^{(j+m+1)h}\frac{u((j+m)h)-u(jh)}{(t-jh)^{1+\alpha}}\,dt\biggr)\\
&=&\frac{1}{\Gamma(-\alpha)}(I_1+I_2).
\end{eqnarray*}
On the one hand, \begin{eqnarray*}
|I_1|&\leq & C[u]_{C^{0,\beta}(\R)}\sum_{m\in\N_0}\int_{(j+m)h}^{(j+m+1)h}\frac{|t-(j+m)h|^{\beta}}{(t-jh)^{1+\alpha}}\,dt \\
&\leq& C_{\alpha}h^{\beta-\alpha}[u]_{C^{0,\beta}(\R)}\left(1+\sum_{m\in\N}\frac{1}{m^{1+\alpha}}\right)= C_{\alpha}h^{\beta-\alpha}[u]_{C^{0,\beta}(\R)},
\end{eqnarray*}
where we have used \eqref{lemma(2)}. On the other hand we compare $I_2$ with $(\delta_{\rm right})^{\alpha}(r_h u)(j)$. By \eqref{lemma(1)}, \begin{displaymath}\begin{array}{l}
\displaystyle\left|\frac{I_2}{\Gamma(-\alpha)}-(\delta_{\rm right})^{\alpha}(r_h u)(j)\right|\\
\displaystyle\leq\sum_{m\in\N}|u((j+m)h)-u(jh)|\left|\frac{1}{\Gamma(-\alpha)}\int_{(j+m)h}^{(j+m+1)h}\frac{dt}{(t-jh)^{1+\alpha}}-\frac{\Lambda^{\alpha}(m)}{h^{\alpha}}\right|\\
\displaystyle\leq C_{\alpha} h^{\beta -\alpha}[u]_{C^{0,\beta}(\R)}\sum_{m\in\N}\frac{m^{\beta}}{m^{2+\alpha}}\leq C_{\alpha} h^{\beta -\alpha}[u]_{C^{0,\beta}(\R)}.
\end{array}\end{displaymath}

For (ii), observe that $\delta_{\rm right}$ commutates with $(\delta_{\rm right})^{\alpha}$ and $\frac{d}{dx}$ with $(D_{\text{right}})^{\alpha}.$ Then we write \begin{eqnarray*}
\lVert -\delta_{\rm right}(\delta_{\rm right})^{\alpha}(r_h u)-r_h (\frac{d}{dx}(D_{\text{right}})^{\alpha}u)\rVert_{\ell^{\infty}}&\leq& \lVert (\delta_{\rm right})^{\alpha}(-\delta_{\rm right})(r_h u)-(\delta_{\rm right})^{\alpha}(r_h\frac{d}{dx}u)\rVert_{\ell^{\infty}}\\
&&+\lVert (\delta_{\rm right})^{\alpha}(r_h\frac{d}{dx}u)-r_h ((D_{\text{right}})^{\alpha}(\frac{d}{dx}u))\rVert_{\ell^{\infty}}.
\end{eqnarray*}
We apply the part (i) to the second term. Let $j\in\Z.$ For the first one, we apply the Mean Value Theorem and the fact that $\sum_{m=0}^\infty\Lambda^\alpha(m)=0$,
\begin{small}
\begin{align*}
&\left|(\delta_{\rm right})^{\alpha}(-\delta_{\rm right})(r_h u)(j)-(\delta_{\rm right})^{\alpha}(r_h\frac{d}{dx}u)(j)\right|=\\
&=\frac{1}{h^{\alpha}}\left|\displaystyle\sum_{m\in\N}\Lambda^{\alpha}(m)\biggl(\frac{u((j+m+1)h)-u((j+m)h)}{h}-u'((j+m)h)-\frac{u((j+1)h)-u(jh)}{h}+u'(jh)\biggr)\right|\\ &=\frac{1}{h^{\alpha}}\left|\displaystyle\sum_{m\in\N}\Lambda^{\alpha}(m)\biggl(u'(\xi_{j+m})-u'((j+m)h)-u'(\xi_{j})+u'(jh)\biggr)\right|\\ \\
&\leq\frac{C}{h^{\alpha}}[u']_{C^{0,\beta}(\R)}\displaystyle\sum_{m\in\N}|\Lambda^{\alpha}(m)|h^{\beta}\leq C_{\alpha}[u']_{C^{0,\beta}(\R)}h^{\alpha-\beta},
\end{align*}
\end{small}
where $\xi_j\in(jh,(j+1)h)$ and $\xi_{j+m}\in((j+m)h,(j+m+1)h).$
$\hfill {\Box}$
\vspace{0.5cm}

{\it Proof of Theorem \ref{GL-M}:}

Given $x\in\R$ and $h>0$, there exists a $j_0\in\Z$ such that $j_0h\leq x< j_0h+h$.
Then, we have
\begin{align*}
\left|(D_{\rm right})^{\alpha} f(x)-\frac{\Delta_{h,+}^\alpha}{h^\alpha}f(x)\right|&\leq \left|(D_{\rm right})^{\alpha} f(x)-(D_{\rm right})^{\alpha} f(j_0h)\right|+\left|(D_{\rm right})^{\alpha} f(j_0h)-\frac{\Delta_{h,+}^\alpha}{h^\alpha}f(j_0h)\right|\\
&+\left|\frac{\Delta_{h,+}^\alpha}{h^\alpha}f(j_0h)-\frac{\Delta_{h,+}^\alpha}{h^\alpha}f(x)\right|=I+II+III.
\end{align*}
On the one hand,
\begin{align*}
I&= \left|\frac{1}{\Gamma(-\alpha)}\int_0^{\infty}\frac{f(x+t)-f(x)}{t^{1+\alpha}}dt -\frac{1}{\Gamma(-\alpha)}\int_0^{\infty}\frac{f(j_0h+t)-f(j_0h)}{t^{1+\alpha}}dt\right|\\
&\leq \left| \frac{1}{\Gamma(-\alpha)}\int_0^h\frac{f(x+t)-f(x)}{t^{1+\alpha}}dt\right|+\left| \frac{1}{\Gamma(-\alpha)}\int_0^h\frac{f(j_0h+t)-f(j_0h)}{t^{1+\alpha}}dt\right|\\
&+\left| \frac{1}{\Gamma(-\alpha)}\int_h^\infty\frac{f(x+t)-f(j_0h+t)}{t^{1+\alpha}}dt\right|+\left| \frac{1}{\Gamma(-\alpha)}\int_h^\infty\frac{f(j_0h)-f(x)}{t^{1+\alpha}}dt\right|\\
&\leq C_{\alpha}[f]_{C^{0,\beta}(\R)}\left(\int_0^h\frac{t^\beta}{t^{1+\alpha}}dt+\int_h^\infty\frac{h^\beta}{t^{1+\alpha}}dt\right)=C_{\alpha,\beta}[f]_{C^{0,\beta}(\R)}h^{\beta-\alpha}.
\end{align*}

By using Theorem \ref{'teo26'} we obtain
$$
II\leq C_{\alpha}[f]_{C^{0,\beta}(\R)}h^{\beta-\alpha}.
$$

On the other hand, as $\sum_{k=0}^\infty\left|\binom{\alpha}{k}\right|\leq C_{\alpha}$, we obtain

\begin{align*}
III&\leq \left|\frac{1}{h^\alpha}\sum_{k=0}^\infty (-1)^k\binom{\alpha}{k}(f(j_0h+kh)-f(x+kh)) \right|\leq\frac{C[f]_{C^{0,\beta}(\R)} }{h^\alpha}\sum_{k=0}^\infty\left|\binom{\alpha}{k}\right| h^\beta\\
&\leq C_{\alpha}[f]_{C^{0,\beta}(\R)} h^{\beta-\alpha},
\end{align*}
so the result follows.

The proof is analogous for $(D_{\text{left}})^{\alpha}$ and $\frac{\Delta_{h,-}^\alpha}{h^\alpha}.$
$\hfill {\Box}$

\section{Extension problem}\label{Sextension}
\setcounter{theorem}{0}
\setcounter{equation}{0}

Theorem \ref{gale} is a straightforward consequence of Remark \ref{semigrupos2} and \cite[Theorem 1.1]{Gale}.  The formula \eqref{solution} provides an explicit expression in our case, that is,
\begin{align}\label{P.K.}
 u(t,n)=P_{t,\pm}^\gamma f(n)&=\frac{t^{2\gamma}}{4^\gamma \Gamma(\gamma)} \int_0^\infty e^{-\frac{t^2}{4s}} T_{s,\pm}f(n) \frac{ds}{s^{1+\gamma}}=\displaystyle\sum_{j=0}^{\infty}\frac{t^{2\gamma} f(n\pm j)}{4^\gamma\Gamma(\gamma)j!}\int_0^{\infty}e^{-s-t^2/4s}s^{j-\gamma}\frac{ds}{s} \\
&=\displaystyle\sum_{j=0}^{\infty} \frac{t^{j+\gamma}}{2^{j+\gamma-1}\Gamma(\gamma)j!}K_{j-\gamma}(t)f(n\pm j) \nonumber
\end{align}
where we have used the identity \cite[2.3.16.1, p. 344]{Prudnikov}. The function $K_{\nu}$ is the \textit{Macdonald's function} (also called \textit{modified Bessel function of the third type}) defined in \cite[Section 5.7, p. 108]{Lebedev}.
By completeness we prove that the previous formula solves \eqref{extension} pointwise for $t\in(0,\infty)$.
We shall use the following identities   (see \cite[Section 5.7]{Lebedev}):\begin{equation*}\label{Mac2}K_{\nu+1}(t)=\frac{2\nu}{t}K_\nu(t)+K_{\nu-1}(t),\   \quad  \frac{\partial}{dt}(t^{\nu}K_{\nu}(t))=-t^{\nu}K_{\nu-1}(t),\  \forall\nu\in\mathbb{R}.\end{equation*}
We have
\begin{align*}
\partial_t P^\gamma_{t,+}f(n)
&=\sum_{j=0}^\infty\frac{f(n+j)}{ 2^{j+\gamma-1}\Gamma(\gamma)j! }(2\gamma t^{j+\gamma-1}K_{j-\gamma}(t)-t^{\gamma+j}K_{j-\gamma-1}(t)),
\end{align*}
and
\begin{align*}
\partial_{tt}P_{t,+}^\gamma f(n)&=\sum_{j=0}^\infty\frac{f(n+j) t^{\gamma+j}}{2^{j+\gamma-1}\Gamma(\gamma)j! }\left(\frac{2\gamma(2\gamma-1)}{t^2}K_{j-\gamma}(t)-\frac{4\gamma+1}{t}K_{j-\gamma-1}(t)+K_{j-\gamma-2}(t)\right)\\
&=\sum_{j=0}^\infty\frac{f(n+j)t^{\gamma+j}}{2^{j+\gamma-1}\Gamma(\gamma)j!  }\left(\left(\frac{2\gamma(2\gamma-1)}{t^2}+1\right)K_{j-\gamma}(t)-\frac{2\gamma+2j-1}{t}K_{j-\gamma-1}(t) \right).
\end{align*}
Then, we obtain
\begin{align*}
(\partial^2_{tt} + \frac{1-2 \gamma}{t}\partial_t)P_{t,+}^\gamma f(n)&=\sum_{j=0}^\infty\frac{f(n+j)t^{\gamma+j}}{2^{j+\gamma-1}\Gamma(\gamma)j! }(K_{j-\gamma}-\frac{2j}{t}K_{j-\gamma-1}(t))\\
&=\delta_{\rm right}P_ {t,+}^\gamma f(n).
\end{align*}
The analogous result for $\delta_{\rm left}P_{t,-}^\gamma$ can be also proved by the same way.

Observe that in other words we have seen that  $P_{t,\pm}^\gamma f$, is precisely the Poisson semigroup associated to $\delta_{\rm right/left}$.

\section{Maximal operators. Littlewood-Paley functions}\label{secciondura}
\setcounter{theorem}{0}
\setcounter{equation}{0}
In this last section we shall consider the maximal functions and the Littlewood-Paley square functions associated to the heat and Poisson semigroups.
Our first observation is that both functions have bad behavior in the case of the heat semigroups. In fact we have the following.

\vspace{0.5 cm}
{\bf Claim $1$ }
{\it The maximal functions of the heat semigroups defined in (\ref{semigrupos}), that is,
$$
T^*_{\pm}f=\sup_{t\geq 0}|T_{t,\pm}f|,
$$ are not bounded from $\ell^p(\Z) $ into itself for $ 1\le p\le2.$}

In fact, let $f(0)=1$ and $f(j)=0$ for $j\neq 0,$ then $T_{t,-}f(n)=0$ for $n<0$ and $T_{t,-}f(n)=e^{-t}\frac{t^{n}}{n!}$ for $n\geq 0.$ The maximum of the function $e^{-t}\frac{t^{n}}{n!}$ is $e^{-n}\frac{n^{n}}{n!},$ and, by Stirling's formula, it behaves asymptotically like $\frac{1}{\sqrt{n}}$ as $n\to \infty.$

\vspace{0.5 cm}

{\bf Claim $2$ }
{\it The Littlewood-Paley functions of the heat semigroups defined by
$$
\displaystyle \mathfrak{g}_{\pm}(f)=\biggl( \int_0^{\infty}|t\partial_t T_{t,\pm}f|^2 \frac{dt}{t}\biggr)^{1/2},$$
 are not bounded from $\ell^2(\Z) $ into itself.}

Observe that  $\widehat{T_{t,+}f}(\theta)=e^{-t(1-e^{i\theta})}\hat{f}(\theta),$ and then it follows
$$\widehat{\partial_t T_{t,+}f}(\theta)=(e^{i\theta}-1)e^{-t(1-e^{i\theta})}\hat{f}(\theta).$$
Let $f\in \ell^2(\Z)$ such that $\int_0^{2\pi}\left(\frac{|\hat{f}(\theta)|}{\theta}\right)^2 d\theta = \infty.$
Hence, by Plancherel's and Fubini's Theorems, we have
\begin{align*}
\|\mathfrak{g}_+(f)\|^2_{\ell^2}&=\displaystyle\sum_{n\in\mathbb{Z}}\int_0^\infty |t\partial_t T_{t,+}f(n)|^2\frac{dt}{t}=\frac{1}{2\pi}\int_0^\infty\int_{\mathbb{T}}|\widehat{t\partial_t T_{t,+}f}(\theta)|^2d\theta\frac{dt}{t}\\
&=\frac{1}{2\pi}\int_0^\infty\int_{\mathbb{T}}|t(e^{i\theta}-1)e^{-t(1-e^{i\theta})}|^2|\hat{f}(\theta)|^2d\theta\frac{dt}{t}\\
&=\frac{1}{2\pi}\int_0^{2\pi}|\hat{f}(\theta)|^2\int_0^\infty t^2|e^{i\theta}-1|^2e^{-2t\Re(1-e^{i\theta})}\frac{dt}{t}d\theta\\
&=\frac{1}{\pi}\int_0^{2\pi}|\hat{f}(\theta)|^2\int_0^\infty t^2(1-\cos \theta)e^{-2t(1-\cos\theta)}\frac{dt}{t}d\theta\\
&\underbrace{=}_{\substack{2(1-\cos\theta)t=u}}\frac{1}{4\pi}\int_0^{2\pi}\frac{|\hat{f}(\theta)|^2}{1-\cos\theta}\int_0^\infty u e^{-u}du\,d\theta=\frac{1}{4\pi}\int_0^{2\pi}\frac{|\hat{f}(\theta)|^2}{1-\cos\theta}d\theta,
\end{align*}
and this integral does not converge.

However, the behavior of the Poisson semigroups is suitable with the classical results in Harmonic analysis. Consider the maximal function associated to the Poisson semigroups
$$
P^{\gamma,*}_{\pm}f=\sup_{t\geq 0}|P_{t,\pm}^{\gamma}f|,\quad 0<\gamma<1,
$$
and the Littlewood-Paley square functions
$$\displaystyle g^{\gamma}_{\pm}(f)=\biggl( \int_0^{\infty}|t\partial_t P^{\gamma}_{t,\pm}f |^2 \frac{dt}{t}\biggr)^{1/2}, \quad  0<\gamma<1.$$
%

To prove Theorem \ref{'teo20'} for these operators, the tool that we shall use is the vector valued Calder\'on-Zygmund Theory in spaces of homogeneous type, more specifically  in the particular case of the integers with the natural distance $d(n,m) = |n-m|$ and measure $\mu(n) =1.$
Given a Banach space $E$, we denote by $\ell^p_E(\Z)$, $1\leq p\leq\infty$, the space of $E$-valued
functions $f$ defined on $\Z$ such that $\|f\|_E$ belongs to $\ell^p(\Z,d,\mu)$.
\begin{definition}[Vector-valued (convolution) Calder\'on-Zygmund operator on $(\Z,d,\mu)$]\label{CZ}
We say that a linear  operator $\mathfrak{T}$ on
the space  $(\Z,d,\mu)$ is a Calder\'on-Zygmund operator if it satisfies the following conditions.
\begin{itemize}
\item [(I)] There exists $1\le p_0 \le\infty$ such that $\mathfrak{T}$ is bounded from
$\ell^{p_0}(\Z)$ into $\ell_E^{p_0}(\Z)$.
\item[(II)] For  bounded  functions $f$ with compact support, $\mathfrak{T}f$ can be represented as
\begin{equation*}
\mathfrak{T}f(n) = \sum_{j\in\Z} K(j)f(j+n),
\end{equation*}
where $K(j) \in \mathcal{L} (\R,E)$ is the space of bounded linear operator from $\R$ to $E$, and satisfies
\begin{itemize}
\item[(II.1)]  $\displaystyle \|K(j) \|_{\mathcal{L} (\R,E)} \le  \frac{C}{|j|}$, for every $j\neq 0$;
\item[(II.2)] $\displaystyle   \|K(j) - K(j_0) \|_{\mathcal{L}(\R,E)}
\le C\frac{|j-j_0|}{|j_0|^2},$
 whenever $|j_0|>2|j-j_0|,  \quad j_0\neq 0;$
\end{itemize}
for some constant $C>0$.
\end{itemize}
\end{definition}
The Calder\'on--Zygmund theorem says that if $\mathfrak{T}$ is a Calder\'on--Zygmund operator on $(\Z,d,\mu)$ as above
then $\mathfrak{T}$ is bounded from $\ell^{p}(\Z)$ into $\ell_E^{p}(\Z)$, for any  $1<p\le\infty$, and it is also of
weak type $(1,1)$.
For full details see \cite{MST, RRT, RT}.

Now we ready to prove the Theorem \ref{'teo20'}. We shall proof only the cases associated to $P_{t,+}^{\gamma}.$ For $P_{t,-}^{\gamma}.$ the proof is analogous.

{\it Proof of Theorem \ref{'teo20'}.}

{\bf Case 1. The maximal function.}

For convenience, we will write  $P_{t,+}^\gamma f(n)=\sum_{j=0}^\infty P_{t}^\gamma(j) f(n+ j),$ where
   $$P_{t}^\gamma(j)=\frac{t^{2\gamma}}{4^\gamma\Gamma(\gamma)j!}\int_0^{\infty}e^{-s-t^2/4s}s^{j-\gamma}\frac{ds}{s}
   \, , \, j\in\N_0.$$
Consider the vector-valued operator
$$\mathfrak{T}f( n) = \Big\{ \sum_{j\in\Z} P_{t}^\gamma(j)  f(n+ j)\Big\}_{t\ge 0}=
\sum_{j\in\Z} \Big\{P_{t}^\gamma(j)  \Big\}_{t\ge0}f( n+j),$$
where we have assumed $P_t^\gamma(j)=0 $ for $j<0$, $t\geq 0$.
The operator $\mathfrak{T}$ satisfies
$ \mathfrak{T}:  \ell^\infty (\Z)   \longrightarrow  \ell^\infty_{L^\infty} (\Z ) .$
In fact
\begin{multline*}
\|\mathfrak{T}f\|_{\ell^\infty_{L^\infty}(\Z)}=\sup_{n\in\Z}\sup_{t\geq 0}|P_{t,+}^\gamma f(n)|
\leq C_{\gamma}\|f\|_\infty\sup_{t\geq 0}\sum_{j=0}^\infty \frac{1}{j!} \int_0^\infty e^{-u}e^{-\frac{t^2}{4u}}u^j\left(\frac{t^2}{u}\right)^\gamma\frac{du}{u} \\
 = C_\gamma\|f\|_\infty\sup_{t\geq 0} \int_0^\infty e^{-\frac{t^2}{4u}}\left(\frac{t^2}{u}\right)^\gamma\frac{du}{u}\underbrace{=}_{\substack{\frac{t^2}{4u}=v}} C_\gamma\|f\|_\infty \sup_{t\geq 0}\int_0^\infty e^{-v}\left(4v\right)^\gamma\frac{dv}{v}<\infty,
\end{multline*}
where we have applied Fubini's Theorem.

Moreover the kernel $\{P_{t}^\gamma(j) \}_{t\geq 0}$ satisfies
\begin{align*}
\|P_{t}^\gamma(j)\|_{L^\infty}&=\sup_{t\geq 0}\frac{1}{4^{\gamma}\Gamma(\gamma)j!}\int_0^\infty e^{-u}e^{-\frac{t^2}{4u}}\left(\frac{t^2}{u}\right)^\gamma u^{j}\frac{du}{u}\leq\sup_{t\geq 0}\frac{C_{\gamma}}{j!}\int_0^\infty e^{-u}u^{j}\frac{du}{u}=C_{\gamma}\frac{1}{j},
\end{align*}
 for $j>0$ and $\|P_{t}^\gamma(j)\|_{L^\infty}=0 \le \frac{C_\gamma}{|j|}$ for $j<0,$ where we have used that the function $g(u)=e^{-\frac{t^2}{4u}}\left(\frac{t^2}{u}\right)^\gamma$ reaches its maximum at $u=t^2/4\gamma.$

Regarding (II.2), it is easy to see that it is enough to prove that for each $j\in \N$,
$$
\sup_{t\geq 0}|P_t^\gamma(j)-P_t^\gamma(j+1)|\leq \frac{C_\gamma}{j^2}.
$$
If $j\geq 1$, then for all $t\geq 0$ we have

\begin{align*}
|P_t^\gamma(j)-P_t^\gamma(j+1)|&=\frac{1}{4^{\gamma}\Gamma(\gamma)j!}\left|\int_0^\infty e^{-u}e^{-\frac{t^2}{4u}}\left(\frac{t^2}{u}\right)^\gamma u^{j}\left(1-\frac{u}{j+1}\right)\frac{du}{u}\right|\\
&=\frac{1}{4^{\gamma}\Gamma(\gamma)(j+1)!}\left|\int_0^\infty \partial_u(e^{-u}u^{j+1})e^{-\frac{t^2}{4u}}\left(\frac{t^2}{u}\right)^\gamma \frac{du}{u}\right|\\
&=\frac{1}{4^{\gamma}\Gamma(\gamma)(j+1)!}\left|\int_0^\infty e^{-u}u^{j+1}\partial_u\left(e^{-\frac{t^2}{4u}}\left(\frac{t^2}{u}\right)^\gamma\frac{1}{u}\right) du\right|,\\
\end{align*}
where we have used integration by parts.
As $ \left|\partial_u\left(e^{-\frac{t^2}{4u}}\left(\frac{t^2}{u}\right)^\gamma\frac{1}{u}\right)\right| \leq \frac{C_{\gamma}}{u^2},$ then $$|P_t^\gamma(j)-P_t^\gamma(j+1)|\leq \frac{C_{\gamma}}{(j+1)!}\int_0^\infty e^{-u}u^{j-1} du=\frac{C_{\gamma}\Gamma(j)}{(j+1)!}=\frac{C_{\gamma}}{j^2}.$$
Finally  since  $\|\mathfrak{T}f(n) \|_{L^\infty} = P^{\gamma,*}_{+}f(n)$  the result follows for the maximal operator, by choosing $p_0= \infty,$  and $E= L^\infty(\R_+).$

{\bf Case 2.  Littlewood-Paley functions}
Consider the vector-valued operator
$$\mathfrak{T}f(n) = \Big\{ \sum_{j\in\Z} t\partial_t P_t^\gamma(j) f(n+j)\Big\}_{t\ge0} = \sum_{j\in\Z} \Big\{t\partial_t P_t^\gamma(j) \Big\}_{t\ge0}f(n+j). $$
In this case we write
$$P_{t}^\gamma(j)=\frac{t^{2\gamma}}{4^\gamma\Gamma(\gamma)j!}\int_0^{\infty}e^{-s-t^2/4s}s^{j-\gamma}\frac{ds}{s}= \frac{1}{\Gamma(\gamma)}\int_0^{\infty}e^{-r} G_{t^2/4r}(j)\frac{dr}{r^{1-\gamma}}\, , \, j\in\N_0.$$  We have performed the change of variables  $r= \frac{t^2}{4s}$ and the sequence $G_t$ is defined in \eqref{semigrupos}.
Again we assume $P_t^\gamma(j) = 0$ for $j<0$, $t\geq 0$. The operator  $\mathfrak{T}$ is bounded from $\ell^2(\Z)$ into $\ell^2_{L^2((0,\infty), \frac{dt}{t})}$.  In fact
\begin{align*}
\|\mathfrak{T}f\|^2_{\ell^2_{L^2((0,\infty), \frac{dt}{t})}}&=\frac{1}{2\pi}\int_0^\infty\int_{\mathbb{T}}|\widehat{t\partial_t P^{\gamma}_{t,+}f}(\theta)|^2d\theta\frac{dt}{t}  = \frac{1}{2\pi}\int_{\mathbb{T}}\int_0^\infty|\widehat{t\partial_t P^{\gamma}_{t}}(\theta)|^2 \frac{dt}{t}|\hat{f}(-\theta)|^2 d \theta .
\end{align*}
We shall see that $\int_0^\infty|\widehat{t\partial_t P^{\gamma}_{t}}(\theta)|^2 \frac{dt}{t}<C_{\gamma},$ where $C_{\gamma}>0$ does not depend on $\theta.$ Observe that
$$ \widehat{t\partial_t P^{\gamma}_{t}}(\theta)   = \frac{1}{\Gamma(\gamma)}\int_0^{\infty}e^{-r} \frac{t^2}{2r} (e^{i\theta}-1)
e^{-\frac{t^2}{4r}(1-e^{i\theta})}\frac{dr}{r^{1-\gamma}}.$$

Notice that if $\theta=0,2\pi$, the statement is trivial, so we have to consider three cases: $0<\theta< \frac{\pi}{4}$, $\frac{\pi}{4}\le\theta\le \frac{7\pi}{4}$ and $\frac{7\pi}{4}<\theta< 2\pi$.

If $0<\theta< \frac{\pi}{4}$, we define $z_0=t(1-e^{i\theta})^{1/2},$ with $\varphi_0=\arg z_0\in (-\pi/4,\pi/4).$ Then,
 \begin{eqnarray*}\int_0^\infty|\widehat{t\partial_t P^{\gamma}_{t}}(\theta)|^2 \frac{dt}{t}&=& C_\gamma\int_0^\infty \Big|\frac{z_0^2}{2}\int_0^{\infty}e^{-r}e^{-z_0^2/(4r)}\frac{dr}{r^{2-\gamma}}\Big|^2 \frac{dt}{t}.\\
\end{eqnarray*}
By applying Cauchy's Theorem, for $\varepsilon,R>0$, we get
$$
\int_\varepsilon^{R}e^{-r}e^{-z_0^2/(4r)}\frac{dr}{r^{2-\gamma}}=\biggr(\int_{\Gamma_\varepsilon}-\int_{\Gamma_{R}}+\int_{\omega_\varepsilon}^{w_R}\biggl)e^{-\omega}e^{-z_0^2/(4\omega)}\frac{d\omega}{\omega^{2-\gamma}},
$$
where $$\Gamma=\{ \omega=sz_0\ | \ 0\leq s\leq\infty \},\ \ \Gamma_{\varepsilon}=\{ \omega=\varepsilon e^{i\varphi}\ | \varphi\in[0,\varphi_0] \},\ \ \Gamma_{R}=\{ \omega=R e^{i\varphi}\ |\varphi\in[0,\varphi_0] \},$$ and $\omega_{\varepsilon}=\varepsilon z_0,\, \omega_{R}=R z_0,$ see the next figure.

\vspace*{1.0cm}

\begin{center}

\begin{tikzpicture}
    \draw[dashed,gray](-2,0)--(5,0);
    \draw[dashed,gray](0,-2)--(0,4);

    \draw[gray](0:0cm)--(40:6cm);
    \path (0,0)++(15:0.9cm) node{$\varphi_0$};
    \draw[->] (0.5,0)arc(0:40:0.5cm);

    \draw[black, thick](40:1.5cm)--(40:4.5cm);
    \draw[black, thick] (1.5,0)arc(0:40:1.5cm);
    \draw[black, thick] (4.5,0)arc(0:40:4.5cm);
    \draw[black,thick](0:1.5cm)--(0:4.5cm);

   \draw[thick](65:1.2cm)node[right]{$\omega_{\varepsilon}$};
   \draw[thick](48:4.2cm)node[right]{$\omega_{R}$};
   \draw[thick](-15:1.2cm)node[right]{$\varepsilon$};
   \draw[thick](-3:4.4cm)node[right]{$R$};



    \draw[thick](3.7,3.7)node[right]{$\Gamma$};
    %
\end{tikzpicture}

\end{center}

\vspace*{0.5cm}

Notice that $$
\displaystyle \Big| \int_{\Gamma_\varepsilon}e^{-\omega}e^{-z_0^2/(4\omega)}\frac{d\omega}{\omega^{2-\gamma}} \Big|\leq \int_{[0,\varphi_0] }|e^{-\varepsilon e^{i\varphi}}e^{-\frac{z_0^2}{4\varepsilon}e^{-i\varphi}}|\frac{d\varphi}{\varepsilon^{1-\gamma}}
\leq \frac{|\varphi_0|e^{-\frac{|z_0|^2\cos(2\varphi_0)}{4\varepsilon}}}{\varepsilon^{1-\gamma}}\to 0, \, \text{as} \, \varepsilon \to 0,
$$ since $e^{-\varepsilon \cos\varphi}\leq 1$ and it can be checked that $\Re(\frac{z_0^2}{4\varepsilon} e^{-i\varphi})=\frac{|z_0|^2}{4\varepsilon}\cos(2\varphi_0-\varphi)\geq \frac{|z_0|^2}{4\varepsilon}\cos(2\varphi_0)$.

Similarly, we get \begin{displaymath}\begin{array}{l}
\displaystyle \Big| \int_{\Gamma_R}e^{-\omega}e^{-z_0^2/(4\omega)}\frac{d\omega}{\omega^{2-\gamma}} \Big|\leq
 \frac{|\varphi_0|e^{-\frac{|z_0|\cos(2\varphi_0)}{4 R}}}{R^{1-\gamma}}\to 0, \, \text{as} \,R \to \infty.
\end{array}\end{displaymath}
Hence
$$
\frac{z_0^2}{2}\int_0^{\infty}e^{-r}e^{-z_0^2/(4r)}\frac{dr}{r^{2-\gamma}}=\frac{z_0^2}{2}\int_{\Gamma}e^{-\omega}e^{-z_0^2/(4\omega)}\frac{d\omega}{\omega^{2-\gamma}}=\frac{z_0^{1+\gamma}}{2}\int_0^\infty e^{-z_0 s}e^{-z_0/(4s)}\frac{ds}{s^{2-\gamma}}
$$
 and
\begin{eqnarray*}\int_0^\infty|\widehat{t\partial_t P^{\gamma}_{t}}(\theta)|^2 \frac{dt}{t}&=&C_\gamma\int_0^{\infty} \Big|\frac{z_0^{1+\gamma}}{2} \int_0^{\infty}e^{-z_0 s}e^{-z_0/(4s)}\frac{ds}{s^{2-\gamma}} \Big|^2 \frac{dt}{t}\\
&\leq& C_{\gamma} \int_0^{\infty} \biggl( t^{1+\gamma} \theta^{\frac{1+\gamma}{2}}\int_0^{\infty}e^{-ct\sqrt{\theta} \cos(\varphi_0)s}e^{-ct\sqrt{\theta}\cos(\varphi_0)/(4s)}\frac{ds}{s^{2-\gamma}} \biggr)^2 \frac{dt}{t}\\
&=&C_{\gamma}\int_0^{\infty} \biggl( t^{1+\gamma}\theta^{\frac{1+\gamma}{2}} K_{\gamma-1}(ct\sqrt{\theta}\cos(\varphi_0)) \biggr)^2 \frac{dt}{t},
\end{eqnarray*} where we have used that $|1-e^{i\theta}|\sim \theta$, for $\theta$ being closed to $0$. In the last identity  $K_{\nu}$ denotes  the Mcdonald's function, see \cite[2.3.16.1, p. 344]{Prudnikov}. The following properties  can be found in \cite[Section 5.7 and Section 5.16]{Lebedev}
 \begin{equation}\label{Mac}K_{-\nu}(z)=K_{\nu}(z), \, K_{\nu}(t)\sim \frac{2^{\nu-1}\Gamma(\nu)}{t^\nu}\text{ as } t\to 0,\text{ and }K_{\nu}(t)\sim \sqrt{\frac{\pi}{2t}}e^{-t}\text{ as } t\to \infty, \, \, \nu >0.\end{equation}
 Then \begin{align*}\int_0^\infty|\widehat{t\partial_t P^{\gamma}_{t}}(\theta)|^2 \frac{dt}{t}&\leq C_{\gamma}\int_0^{\infty} \biggl( t^{1+\gamma}\theta^{\frac{1+\gamma}{2}} K_{1-\gamma}(ct\sqrt{\theta}\cos(\varphi_0)) \biggr)^2 \frac{dt}{t}\\
&\leq C_{\gamma}\int_0^{\frac{1}{c\sqrt{\theta}\cos\varphi_0}} \frac{t^{2+2\gamma} \theta^{1+\gamma}}{t^{2-2\gamma}\theta^{1-\gamma}(\cos\varphi_0)^{2-2\gamma}}\frac{dt}{t}+ C_{\gamma}\int_{\frac{1}{c\sqrt{\theta}\cos\varphi_0}}^\infty\frac{t^{2+2\gamma} \theta^{1+\gamma}e^{-2ct\sqrt{\theta}\cos\varphi_0}}{t\sqrt{\theta}\cos\varphi_0}\frac{dt}{t}\\
&\underbrace{=}_{\substack{ct\sqrt{\theta}\cos\varphi_0}=u}C_\gamma\int_0^1 \frac{du}{u^{1-4\gamma}} +C_\gamma\int_1^{\infty} u^{2\gamma+1}e^{-2u}\frac{du}{u}<\infty.
\end{align*}
Observe that in the last identity we have used that $\sqrt{2}/2\le \cos\varphi_0\le 1$.

The case $\frac{7\pi}{4}<\theta< 2\pi$ follows analogously, by using that $|1-e^{i\theta}|=|1-e^{i(\theta-2\pi)}|\sim |\theta-2\pi|$, for $\theta$ being closed to $2\pi$.

On the other hand, if $\frac{\pi}{4}\leq \theta\leq \frac{7\pi}{4}$, then by using again \cite[2.3.16.1, p. 344]{Prudnikov} we get
\begin{align*}
|\widehat{t\partial_tP_t^\gamma}(\theta)|&\leq\frac{1}{\Gamma(\gamma)}\int_0^\infty e^{-r} \frac{t^2}{2r}|e^{i\theta}-1|e^{-\frac{t^2}{4r}\Re(1-e^{i\theta})}\frac{dr}{r^{1-\gamma}}\leq C_\gamma\int_0^\infty e^{-r} \frac{t^2}{2r}e^{-\frac{t^2}{4r}(1-\cos\theta)}\frac{dr}{r^{1-\gamma}}\\
&\leq C_\gamma\int_0^\infty e^{-r} \frac{t^2}{2r}e^{-\frac{t^2}{4r}(1-\sqrt{2}/2)}\frac{dr}{r^{1-\gamma}}\leq C_\gamma\frac{t^2}{2}t^{\gamma-1}K_{1-\gamma}\bigg(t\sqrt{1-\sqrt{2}/2}\bigg)\\
&=C_\gamma t^{1+\gamma}K_{1-\gamma}(\tilde{c} t).
\end{align*}
Hence by estimates \eqref{Mac}
\begin{align*}
\int_0^\infty|\widehat{t\partial_t P^{\gamma}_{t}}(\theta)|^2 \frac{dt}{t}&=C_\gamma\int_0^{1/\tilde{c}}\frac{t^{2+2\gamma}}{(\tilde{c}t)^{2-2\gamma}}\frac{dt}{t}+ C_\gamma\int_{1/\tilde{c}}^\infty\frac{t^{2+2\gamma} e^{-2\tilde{c}t}}{\tilde{c}t}\frac{dt}{t}\\
&\underbrace{=}_{\substack{\tilde{c}t=u}}C_\gamma\int_0^1 \frac{du}{u^{1-4\gamma}} +C_\gamma\int_1^{\infty} u^{2\gamma+1}e^{-2u}\frac{du}{u}<\infty.
\end{align*}

Therefore, we have proved that $\|\mathfrak{T}f\|^2_{\ell^2_{L^2((0,\infty), \frac{dt}{t})}}\leq C_{\gamma}\lVert f\rVert_2.$

By the representation of the Poisson kernel, (\ref{P.K.}), we can write, for all $n\in \Z$, $t\geq 0$,
$$
t\partial_tP_{t,\pm}^{\gamma}f(n)=\sum_{j=0}^\infty\frac{f(n\pm j)}{2\sqrt{\pi}j!}\int_0^\infty e^{-u}u^{j-\gamma}\left(2\gamma-\frac{t^2}{2u}\right)e^{-\frac{t^2}{4u}}t^{2\gamma}\frac{du}{u}=\sum_{j=0}^\infty t\partial_tP_t^\gamma(j)f(n\pm j).
$$

Then, by Minkowski's integral inequality, we get  
\begin{align*}
\|t\partial _t P_{t}^\gamma(j)\|_{L^2((0,\infty), \frac{dt}{t})}&=C_{\gamma}\left(\int_0^\infty\left|\int_0^\infty\left(2\gamma-\frac{t^2}{2u}\right)e^{-u}\frac{u^{j-\gamma}}{j!}e^{-\frac{t^2}{4u}}t^{2\gamma}\frac{du}{u}\right|^2\frac{dt}{t}\right)^{1/2}\\
&\leq C_{\gamma}\int_0^\infty\frac{u^j}{j!}e^{-u}\left( \int_0^\infty\left|2\gamma-\frac{t^2}{2u}\right|^2e^{-\frac{t^2}{2u}}\left(\frac{t^2}{u}\right)^{2\gamma}\frac{dt}{t} \right)^{1/2}\frac{du}{u}\\
&=C_{\gamma}\int_0^\infty\frac{u^j}{j!}e^{-u}I^{1/2}\frac{du}{u}, \, \, \; \text{for} \, j\geq 1.
\end{align*}
 Observe that

\begin{align*}
I&\leq C\int_{0}^{2\sqrt{\gamma u}}4\gamma^2e^{-\frac{t^2}{2u}}\left(\frac{t^2}{u}\right)^{2\gamma}\frac{dt}{t}+C\int_{2\sqrt{\gamma u}}^\infty \frac{t^4}{4u^2}e^{-\frac{t^2}{2u}}\left(\frac{t^2}{u}\right)^{2\gamma}\frac{dt}{t}\\
&\underbrace{\leq}_{\substack{\frac{t}{2\sqrt{\gamma u}}=v}}C_\gamma\int_0^1e^{-2\gamma v^2}v^{4\gamma}\frac{dv}{v}+C_\gamma\int_1^\infty v^{4+4\gamma} e^{-2\gamma v^2}\frac{dv}{v}\leq C_\gamma.
\end{align*}

Hence, if $j\geq 1$,
$$
\|t\partial _t P_{t}^\gamma(j)\|_{L^2((0,\infty),\frac{dt}{t})}\leq C_\gamma \int_0^\infty \frac{u^j}{j!}e^{-u}\frac{du}{u}=C_\gamma\frac{\Gamma(j)}{\Gamma(j+1)}\leq \frac{C_\gamma}{j},
$$
and $\|t\partial _t P_{t}^\gamma(j)\|_{L^2((0,\infty), \frac{dt}{t})}=0 \le \frac{C_\gamma}{|j|}$ for $j<0.$

Regarding (II.2), if $j\geq 1$,

\begin{equation*}
\begin{array}{l}
\displaystyle\|t\partial _t P_{t}^\gamma(j)-t\partial _t P_{t}^\gamma(j+1)\|_{L^2((0,\infty),\frac{dt}{t})}\\
\displaystyle=C_\gamma\left(\int_0^\infty\left|\int_0^\infty\left(2\gamma-\frac{t^2}{2u}\right)\frac{e^{-u}}{j!}u^{j}\left( 1-\frac{u}{j+1}\right)e^{-\frac{t^2}{4u}}\left(\frac{t^2}{u}\right)^{\gamma}\frac{du}{u}\right|^2\frac{dt}{t}\right)^{1/2}\\
\displaystyle=C_\gamma\left(\int_0^\infty\left|\int_0^\infty\frac{1}{j!}\left(2\gamma-\frac{t^2}{2u}\right)e^{-\frac{t^2}{4u}}\left(\frac{t^2}{u}\right)^{\gamma}\frac{1}{u} \frac{\partial_u(e^{-u}u^{j+1})}{j+1}du\right|^2\frac{dt}{t}\right)^{1/2}\\
\displaystyle=C_\gamma\left(\int_0^\infty\left|\int_0^\infty\frac{1}{(j+1)!}e^{-u}u^{j+1}\partial_u\left\{\left(2\gamma-\frac{t^2}{2u}\right)e^{-\frac{t^2}{4u}}\left(\frac{t^2}{u}\right)^{\gamma}\frac{1}{u} \right\}du\right|^2\frac{dt}{t}\right)^{1/2}.
\end{array}
\end{equation*}
Note that
\begin{align*}
\left|\partial_u\left\{\left(2\gamma-\frac{t^2}{2u}\right)e^{-\frac{t^2}{4u}}\left(\frac{t^2}{u}\right)^{\gamma}\frac{1}{u} \right\}\right|& \leq C_\gamma\frac{e^{-\frac{t^2}{4u}}}{u^2}\left(\frac{t^2}{u}\right)^{\gamma} \left[1+\left(\frac{t^2}{u}\right)+\left(\frac{t^2}{u}\right)^2\right].
\end{align*}
Hence, by Minkowski's inequality,
\begin{equation*}
\begin{array}{l}
\displaystyle\|t\partial _t P_{t}^\gamma(j)-t\partial _t P_{t}^\gamma(j+1)\|_{L^2((0,\infty),\frac{dt}{t})}\\
\displaystyle\leq \frac{C_\gamma}{(j+1)!}\int_0^\infty e^{-u}u^{j-1}\bigg(\int_0^\infty\bigg|e^{-\frac{t^2}{4u}}\left(\frac{t^2}{u}\right)^{\gamma} \left[1+\left(\frac{t^2}{u}\right)+\left(\frac{t^2}{u}\right)^2\right]\bigg|^2\frac{dt}{t} \bigg)^{1/2}du\\
\displaystyle\leq C_\gamma \frac{\Gamma(j)}{\Gamma(j+2)}\leq \frac{C_\gamma}{j^2}.
\end{array}
\end{equation*}

Finally since   $\|\mathfrak{T}f(n) \|_{L^2((0,\infty), \frac{dt}{t})} = g_{+}^{\gamma}f(n),$  the result follows for the maximal operator, by choosing $p_0=2,$  and $E= L^2((0,\infty), \frac{dt}{t}).$

$\hfill {\Box}$

\subsection*{\it Acknowledgments.} We thank Pablo Ra\'ul Stinga for fruitful conversations that have contributed to improve the contain of
the paper.

This work was done while the first author was visiting
the University Aut\'onoma of Madrid. He is grateful to the other authors for their kind hospitality.

\end{document}